\theoremstyle{plain}
\newtheorem{tw}{Theorem}[section]
\newtheorem {lem} [tw]{Lemma}
\newtheorem {prop}[tw] {Proposition}
\newtheorem{cor}[tw]{Corollary}
\theoremstyle{definition}
\newtheorem {deft}[tw] {Definition}
\newtheorem {rem} [tw]{Remark}
\newtheorem {example}[tw] {Example}
\newcommand{\bc} {\Bbb C}
\newcommand{\bn}{\Bbb N}
\newcommand{\bF}{\Bbb F}
\newcommand{\bt}{\Bbb T}
\newcommand{\bz}{\Bbb Z}
\newcommand{\OA}{\mathsf{O}_A}
\newcommand{\On}{\mathsf{O}_n}
\newcommand{\alg} {\mathsf{A}}
\newcommand{\blg} {\mathsf{B}}
\newcommand{\dlg} {\mathsf{D}}
\newcommand{\Alg} {\mathcal{A}}
\newcommand{\Dlg} {\mathcal{D}}
\newcommand{\Blg} {\mathcal{B}}
\newcommand {\QG} {{\mathbb{G}}}
\newcommand {\QH} {{\mathbb{H}}}
\newcommand {\id} {{\textup{id}}}
\newcommand {\QISO} {{\mathsf{QISO}}}
\newcommand {\QISOb} {{\mathsf{QISO}}_b}
\newcommand {\QIb} {{\mathsf{QISO}}_{(b)}}
\newcommand{\tu}{\textup}
\newcommand{\catV}{\mathcal{C}_{\alg,\mathcal{V}}}
\newcommand{\Hil}{\mathsf{H}}
\newcommand{\Ind}{\mathcal{I}}
\newcommand{\Jnd}{\mathcal{J}}
\newcommand{\cat}{\mathcal{C}}
\newcommand{\Seq}{\mathcal{S}}
\newcommand{\Com}{\Delta}
\newcommand{\freedual}{\widehat{\bF_n}}
\newcommand{\Irr}{\textup{Irr}}
\newenvironment{rlist}
{

\begin{enumerate}}
{\end{enumerate}}
\newcommand{\ot}{\otimes}
\newcommand{\ida}{1_{\alg}}
\newcommand{\wt}{\widetilde}
\numberwithin{equation}{section}
\begin{document}

\author{Teodor Banica}
\address{T.B.: Department of Mathematics, Cergy-Pontoise University, 95000 Cergy-Pontoise, France. {\tt teodor.banica@u-cergy.fr}}

\author{Adam Skalski}
\address{A.S.: Institute of Mathematics of the Polish Academy of Sciences, ul. \'Sniadeckich 8, 00-956 Warszawa, Poland. {\tt a.skalski@impan.pl}}

\keywords{Quantum symmetry group, filtered algebra, partial isometry, non-crossing partitions}
\subjclass[2000]{Primary 46L65, Secondary 16W30, 46L54}

\title[Quantum symmetry groups]{\bf Quantum symmetry groups of $C^*$-algebras equipped with orthogonal filtrations}

\begin{abstract}
Motivated by the work of Goswami on quantum isometry groups of noncommutative manifolds we define the quantum symmetry group of a unital $C^*$-algebra $\alg$ equipped with an orthogonal filtration as the universal object in the category of compact quantum groups acting on $\alg$ in a filtration preserving fashion. The existence of such a universal object is proved and several examples discussed. In particular we study the universal quantum group acting on the dual of the free group and preserving both the word length and the block length. 
\end{abstract}

\maketitle

\section{Introduction}

Groups first appeared in mathematics, in the second half of the 19th century, as collections of symmetries of some structure: a finite set, a figure on the plane, a set of solutions of a given equation, etc..\  When compact quantum groups entered the scene over a hundred years later, after many earlier developments due to the work of Kac, Vainerman, Enock, Schwartz, Takesaki, Drinfeld and others, the initial examples were rather constructed via  algebraic methods (\cite{wo1}). This was very natural, as the approach to quantum groups based on the Gelfand-Najmark duality means that in fact instead of studying directly a compact quantum group $\QG$ we rather work with the algebra $C(\QG)$ playing the role of the algebra of continuous functions on $\QG$. It is therefore to be expected that examples can be obtained by considering algebraic deformations of the algebra of functions on a classical group. On the other hand, from the modern perspective, a symmetry group of a given structure $X$ can be viewed as the universal, final object in the category of groups acting on $X$. This observation, together with the fact that soon after the fundamental paper of Woronowicz the notion of an action of a compact quantum group on a $C^*$-algebra was introduced in \cite{Podles} and \cite{Boca}, opened the road to defining quantum symmetry groups.

The first developments in this area are due to Wang, who in \cite{wa2} defined the quantum symmetry group of a finite set (the quantum permutation group $S^+_n$) and the quantum symmetry group of a finite-dimensional $C^*$-algebra equipped with a faithful state. Already here many interesting aspects of the theory manifested -- for example it turned out that the quantum permutation group $S^+_n$ is infinite, as soon as $n\geq 4$. The study of quantum symmetry groups of finite structures initiated by Wang was later extended by the first named author, Bichon, Collins and others. In particular quantum symmetry groups of finite graphs and finite metric spaces (see for example \cite{banmet}, \cite{BanBic}) were introduced in their work. This led to several new examples of quantum groups, provided methods of computing their representation theory and exposed  multiple connections to free probability. These investigations were moreover one of the natural impulses for the study of \emph{liberated quantum groups} (\cite{bsp}), which are free counterparts of some classical compact groups.

Up to 2009 the quantum symmetry groups were only defined for various types of finite structures. The next important step in the development of the theory arrived  when Goswami proposed in \cite{gos} a definition of the quantum isometry group of a noncommutative compact  manifold \`a la Connes, thus generalizing the classical notion of the isometry group of a compact Riemannian manifold. In this ground-breaking paper, and in the following works (\cite{bg1}, \cite{JyotDeb2}, \cite{bgs}, \cite{bhs}) many examples of such quantum isometry groups were computed. Interestingly, it turned out that in all examples studied so far quantum isometry groups of classical connected manifolds coincide with classical ones. In \cite{bhs} a particular focus was put on the class of noncommutative manifolds related to duals of finitely generated discrete groups More specifically, Bhowmick and the second named author introduced there the notion of the quantum symmetry group acting on the  dual of a finitely generated discrete group in a \emph{word length} preserving fashion. We continued the study of such quantum groups, concentrating on the free products of cyclic groups, in \cite{bsk1} and \cite{bsk2}, where we discovered many connections with the theory of liberated quantum groups and with free probabilistic concepts. Another recent example of the computations of quantum symmetry groups of that type can be found in \cite{LDS}.

Motivated by the ideas of Goswami and his collaborators we develop in this paper a \emph{general theory of quantum symmetry groups of $C^*$-algebras equipped with orthogonal filtrations}. The filtrations with which we work consist of finite-dimensional subspaces. This can be viewed as a natural `compactness' property of the possibly infinite structure we consider and allows us to establish that the universal object in the corresponding category of quantum group actions preserving a given filtration exist (note that although for simplicity we consider here only the category of actions of compact quantum groups, we could also allow actions of arbitrary locally compact quantum groups, following the discussion after Theorem 1.1 in \cite{LDS}). The main theorem is the following (the detailed statement and all relevant definitions can be found in Section 2).

\smallskip
\noindent\textbf{Theorem.} \emph{ Let $\alg$ be a unital $C^*$-algebra with a faithful state $\omega$ and let $\mathcal{V}$ be a filtration of $\alg$, i.e.\ a collection of finite-dimensional subspaces of $\alg$, which are pairwise mutually orthogonal with respect to the scalar product given by $\omega$, and together span a dense $^*$-subalgebra of $\alg$. Then there exists a universal compact quantum group acting on $\alg$ in such a way that the action preserves the filtration $\mathcal{V}$.}

\smallskip
The resulting framework turns out to be very general. In particular it covers
\begin{itemize}
\item all the quantum symmetry groups of finite structures (sets, graphs, finite-dimensional $C^*$-algebras) studied before
\item quantum symmetry groups of AF algebras associated with their Bratteli diagrams
\item quantum symmetry groups of graph $C^*$-algebras
\item quantum symmetry groups of filtrations induced by \emph{ergodic} actions of compact (quantum) groups
\item  several classes of quantum symmetry groups associated to duals of discrete groups; in particular it allows us to extend the study of the length preserving actions initiated in \cite{bhs} to, for example, the actions which preserve the so-called \emph{block length}
\item even more importantly, it also opens the way to replacing the duals of discrete groups by arbitrary compact quantum groups, using the notion of the length on discrete quantum groups developed in \cite{Vergnioux}.
\end{itemize}

As a particular, very concrete example we show that the quantum symmetry group acting on the dual of the free group $\mathbb{F}_n$ in the block length preserving fashion is $K_n^+$, a compact quantum group first discovered in \cite{bsk2} from the representation theory perspective. Together with the quantum groups $H^+(p,q)$ and  $S^+(p,q)$  first studied in \cite{bsk1}, $K^+_n$ is very closely connected to the class of \emph{easy} quantum groups investigated in \cite{bsp} and \cite{bcs}.

The detailed plan of the paper is as follows: we begin with a short preliminaries subsection, in which we recall the basic definitions related to compact quantum groups and agree the notation. The second section contains the main result of the paper, Theorem \ref{qsymV}, establishing the existence of the universal compact quantum group acting on a $C^*$-algebra equipped with an orthogonal filtration. In Section 3 we describe several examples in which the main result applies and compare the framework of orthogonal filtrations with that considered in the theory of quantum isometry groups introduced in \cite{JyotDeb2}. Section 4 continues the discussion of examples, focusing on the case where the algebra in question is the group $C^*$-algebra of a discrete group $\Gamma$ and the filtration comes from a partition of $\Gamma$; here we show for example that the quantum isometry group of the dual of $\bz_2^{\oplus k}$ is the quantum group $O_k^{-1}$ studied in \cite{bbc}. In Section 5 we specialise even further, show that the quantum symmetry group acting on the dual of the free group $\mathbb{F}_n$ in the block length preserving fashion is $K_n^+$, and describe its representation theory. Finally we mention that $K^+_n$ and other compact quantum groups discovered in \cite{bsk1} can be viewed as \emph{super-easy} quantum groups,  a class naturally extending that of easy quantum groups studied in \cite{bsp} and \cite{bcs}.

\subsection{Preliminaries}

The algebraic tensor product will be denoted by $\odot$, the symbol $\ot$ will be reserved for tensor products of maps and for spatial tensor products of $C^*$-algebras.

The following definitions date back to the fundamental papers \cite{wo1} and \cite{wo2}.

\begin{deft}
A unital $C^*$-algebra $\alg$ equipped with a unital $^*$-homomorphism $\Com:\alg \to \alg \ot \alg$ which is coassociative:
\[ (\Com \ot \ida) \Com = (\ida \ot \Com) \Com\]
and satisfies the quantum cancellation rules:
\[ \overline{\tu{Lin}}\, \Com(\alg) (1 \ot \alg) = \overline{\tu{Lin}}\, \Com(\alg) (\alg \ot 1) = \alg \ot \alg\]
is called the algebra of continuous functions on a compact quantum group. We usually write $\alg=C(\QG)$ and informally call $\QG$ a compact quantum group.
\end{deft}

\begin{deft}
A unitary matrix $U=(U_{ij})_{i,j=1}^n \in M_n (C(\QG))$ is called a unitary representation of $\QG$ if for each $i,j=1,\ldots,n$
\[ \Com(U_{ij})=\sum_{k=1}^n U_{ik} \ot U_{kj}.\]
If $\{U_{ij}:i,j=1,\ldots,n\}$, the set of \emph{coefficients} of $U$, generates $C(\QG)$ as a $C^*$-algebra, we call it a fundamental representation (and say that $\QG$ is a compact matrix quantum group). A unitary representation is called irreducible if the only matrices in $M_n$ commuting with $U$ are multiples of the identity matrix.
\end{deft}

The complete set of unitary equivalence classes of irreducible unitary representations of $\QG$ will be denoted by  $\textup{Irr} (\QG)$.
The span of all coefficients of unitary representations of $\QG$ forms a dense unital $^*$-subalgebra of $C(\QG)$, denoted $R(\QG)$ (and in fact equipped with a canonical Hopf $^*$-algebra structure). Each compact quantum group admits a unique \emph{Haar state}, i.e.\ a state $h\in C(\QG)^*$ such that
\[ (h \ot \ida)\Com = (\ida \ot h)\Com = h(\cdot) 1_{C(\QG)}.\]
The Haar state need not be faithful (unless $\QG$ is \emph{coamenable}). We will at some point need to consider the \emph{reduced} version of $C(\QG)$, denoted by $C_r(\QG)$ and arising as the image of $C(\QG)$ under the GNS representation with respect to the Haar state, and the \emph{universal} version of $C(\QG)$, denoted by $C_u(\QG)$, and arising as the enveloping $C^*$-algebra of $R(\QG)$. For more information on this we refer to \cite{coamen}.

\section{Quantum isometry groups associated to $C^*$-algebras equipped with an orthogonal filtration}

In our previous work (\cite{bhs}, \cite{bsk1}, \cite{bsk2}) we studied quantum isometry groups related to the word-length preserving actions on duals of discrete groups. The existence of such isometry groups is a consequence of the general results of \cite{JyotDeb2}, where Bhowmick and Goswami defined quantum groups of orientation preserving isometries.

In this section we show how we can in general associate a quantum symmetry group with each $C^*$-algebra equipped with an `orthogonal' filtration. The special cases of this construction have been considered in \cite{bgs} (quantum isometry groups of AF algebras) and in \cite{bhs} (afore-mentioned universal quantum groups acting on duals of finitely generated discrete groups in a word-length preserving way). The proof of the existence is inspired by the results in Section 4 in \cite{gos}, but we also use some ideas of \cite{Sabbe}. Some comments on the relation of this work to previous results available in literature will be made after the theorem is proved and  in the following section.

\begin{deft} \label{orthfilt}
Let $\alg$ be a unital $C^*$-algebra equipped with a faithful state $\omega$ and with a family $(V_i)_{i\in \Ind}$ of finite-dimensional subspaces of $\alg$ (with the index set $\Ind$ containing a distinguished element 0) satisfying the following conditions:
\begin{rlist}
\item $V_0=\bc 1_{\alg}$;
\item for all $i,j\in \Ind$, $i\neq j$, $a\in V_i$ and $b\in V_j$ we have $\omega(a^*b)=0$;
\item the set $\textup{Lin} (\bigcup_{i\in \Ind} V_i)$ 
is a dense $^*$-subalgebra of $\alg$.
\end{rlist}
If the above conditions are satisfied we say that the pair $(\omega, (V_i)_{i\in \Ind})$ defines an orthogonal filtration of $\alg$; sometimes abusing the notation we will omit $\omega$ and simply say that $(\alg, (V_i)_{i\in \Ind})$ is a $C^*$-algebra with an orthogonal filtration. The (dense) $^*$-subalgebra spanned in $\alg$ by $\{V_i:i\in \Ind\}$ will be denoted by $\Alg$.
\end{deft}

Note that the existence of an orthogonal filtration does not imply that the $C^*$-algebra $\alg$ is AF (although of course unital separable AF $C^*$-algebras admit orthogonal filtrations). Other examples of importance for us are the reduced group $C^*$-algebras; this will be explained in detail in Section \ref{group}. In most cases we have in fact $V_i=V_i^*$ and $\omega$ is a trace. Note that $\alg$ can be viewed as the completion of $\Alg$ in the GNS representation with respect to $\omega$. 



The following definition comes from \cite{Podles}.

\begin{deft} \label{defcat}
Let $\alg$ be a unital $C^*$-algebra. We say that a compact quantum group $\QG$ acts on $\alg$ if there exists a unital $^*$-homomorphism $\alpha:\alg \to \alg \ot C(\QG)$, called the action of $\QG$ on $\alg$,  such that
\begin{rlist}
\item $(\alpha \ot \id_{C(\QG)}) \alpha = (\id_{\alg} \ot \Com) \alpha;$
\item $\overline{\tu{Lin}}\, \alpha(\alg) (1 \ot C(\QG)) = \alg \ot C(\QG).$
\end{rlist}
\end{deft}

\begin{deft}
Let $(\alg, \omega, (V_i)_{i\in \Ind})$ be a $C^*$-algebra with an orthogonal filtration. We say that a quantum group $\QG$ acts on $\alg$ in a filtration preserving way if there exists an action $\alpha$ of $\QG$ on $\alg$ such that the following condition holds:
\[\alpha(V_i) \subset V_i \odot C(\QG), \;\; i\in \Ind.\]
We will then write $(\alpha,\QG)\in \catV$.
\end{deft}

Note that $\alpha$ as above automatically preserves the sets $V_i^*$ ($i\in \Ind$). It is also easy to see that if $(\alpha, \QG)\in \catV$, then $\alpha$ preserves the state $\omega$:
\begin{equation}\label{statepreserve} (\omega \ot \id_{C(\QG)}) \circ \alpha = \omega(\cdot)1_{C(\QG)}.\end{equation}
Indeed, the conditions (i) and (iii) in Definition \ref{orthfilt} imply that $\omega(a)=0$ for all $a\in \tu{Lin}\,(\cup_{i\in \Ind\setminus\{0\}} V_i)$. Hence the equality \eqref{statepreserve} holds on the dense subalgebra $\Alg$; as both sides of \eqref{statepreserve} are continuous, it must in fact hold everywhere.

Before we continue, we make one important observation.  Let $(\alpha,\QG) \in \catV$. It is not difficult to check that in fact for each $i\in \Ind$ we have
\[\alpha(V_i) \subset V_i \odot R(\QG)\]
(in fact in the first two steps of the proof of Theorem \ref{qsymV} below we show that in a certain natural sense $\alpha|_{V_i}$ induces a $\textup{dim}(V_i)$-dimensional unitary representation of $\QG$). Moreover we can always assume, by passing to the GNS representation $\pi_h$ of $C(\QG)$ with respect to the Haar state of $\QG$, that actually we are in the situation $\alpha:\alg \to \alg \ot C_r(\QG)$,  where $C_r(\QG)$ denotes the `reduced version' of $C(\QG)$. Formally speaking we define the new action by $\wt{\alpha}:=(\ida \ot \pi_h)\alpha$ (for more information on this procedure we refer to \cite{Soltan}). As this passage does not change essentially the character of the action, we will usually assume that  the quantum group actions are reduced.

The morphisms in the category $\catV$ are compact quantum group morphisms which intertwine the respective actions. Here we need to be careful, as natural compact quantum group morphisms act between \emph{universal} versions of the algebras $C(\QG)$.  This means that if $(\QG_1,\alpha_1), (\QG_2, \alpha_2) \in \catV$ then a morphism from $(\QG_1,\alpha_1)$ to $(\QG_2, \alpha_2)$ is a unital $^*$-homomorphism $\pi:C_u(\QG_2) \to C_u(\QG_1)$ such that
\[ (\pi \ot \pi) \circ \Com_2 = \Com_1 \circ \pi\]
(so that $\pi$ automatically restricts to a unital $^*$-homomorphism $\pi_0:R(\QG_2) \to R(\QG_1)$) and
\[ (\id_{\alg} \ot \pi_0)\alpha_2|_{\Alg} =\alpha_1|_{\Alg}\]
(recall that $\Alg$ is the dense subalgebra of $\alg$ spanned by elements in all subspaces $V_i$, $i\in \Ind$). Note the inversion of arrows. The subtleties related to distinguishing between the universal and reduced versions of course disappear if the quantum groups in question happen to be coamenable.

\begin{deft}
We say that $(\alpha_u, \QG_u)$ is a universal final object in $\catV$ if for any $(\alpha,\QG) \in \catV$ there exists a unique morphism $\pi$ from  $(\alpha,\QG)$ to $(\alpha_u, \QG_u)$.
\end{deft}

Note that if the  final object in $\catV$ exists, it is automatically unique up to isomorphism. Before we begin the proof of the main theorem of this section, let us observe that the category introduced above can be studied in a purely Hopf $^*$-algebraic language.

Define indeed for each $i\in \Ind$ the subspace $\Blg_i$ of $R(\QG)$ as $\{(f\ot \id)\alpha(v): v\in V_i, f \in V_i'\}$. The algebra generated by all $\Blg_i$ inside $R(\QG)$ is a Hopf $^*$-algebra, which we will denote $R_{\alpha}(\QG)$. If $R_{\alpha}(\QG)$ is dense in $C(\QG)$ (equivalently, $R_{\alpha}(\QG)$ is equal to $R(\QG)$, see \cite{DiK}), we say that the action $\alpha$ of $\QG$ on $\alg$ is \emph{faithful}.

Consider now the algebraic version of the category $\catV$,  denoted $\catV^{alg}$.

\begin{deft} \label{defcatalg}
We say that a compact quantum group $\QG$ admits an algebraic action $\alpha_0$ on $\Alg$ (the dense subalgebra of $\alg$), preserving the filtration $\mathcal{V}$ if $\alpha_0:\Alg \to \Alg \odot R(\QG)$ is a unital $^*$-homomorphism such that
\begin{rlist}
\item $ \alpha_0(\Alg) (1 \odot R(\QG)) = \Alg \odot R(\QG);$
\item $(\alpha_0 \ot \id_{R(\QG)}) \alpha_0 = (\id_{\Alg} \ot \Com) \alpha_0$;
\item $\alpha_0(V_i) \subset V_i \odot R(\QG), \;\; i \in \Ind.$
\end{rlist}
We then write $(\alpha_0,\QG) \in \catV^{alg}$. The morphisms in $\catV^{alg}$ are defined analogously to those in $\catV$, with all maps acting between the algebraic objects (so that $\pi:R(\QG_2)\to R(\QG_1)$, etc.).
\end{deft}

\begin{lem} \label{isom}
The categories $\catV$ and $\catV^{alg}$ are isomorphic.
\end{lem}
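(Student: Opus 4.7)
The plan is to construct mutually inverse functors $F\colon \catV \to \catV^{alg}$ (restriction to the dense subalgebra $\Alg$) and $G\colon \catV^{alg} \to \catV$ (extension via a GNS-type construction), both leaving the underlying quantum group $\QG$ unchanged on objects. Since morphisms in $\catV$ are already defined at the universal Hopf $^*$-algebraic level and restrict to morphisms $R(\QG_2) \to R(\QG_1)$ of Hopf $^*$-algebras, the morphism part of the equivalence is immediate once the object-level correspondence is established. Thus the substantive content of the proof concerns the two object-level constructions.

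\textbf{Restriction.} For $F$, set $\alpha_0 := \alpha|_\Alg$. The discussion preceding the lemma already explains that $\alpha(V_i) \subset V_i \odot R(\QG)$: the matrix $U = (u_{jk}) \in M_{\dim V_i}(C(\QG))$ defined on a basis $\{e_j\}$ of $V_i$ by $\alpha(e_k) = \sum_j e_j \otimes u_{jk}$ is, by coassociativity, the coefficient matrix of a finite-dimensional unitary representation of $\QG$, so its entries lie in $R(\QG)$. Multiplicativity then gives $\alpha_0(\Alg) \subset \Alg \odot R(\QG)$, coassociativity is inherited, and the algebraic Podle\'s condition follows from the unitarity of $U$ through
\[
\sum_j \alpha_0(e_j)(1 \otimes u_{kj}^* r) \;=\; \sum_{j,l} e_l \otimes u_{lj} u_{kj}^* r \;=\; e_k \otimes r
\]
for each $r \in R(\QG)$ and each basis vector $e_k$.

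\textbf{Extension.} For $G$, the first step is to verify that any $(\alpha_0,\QG) \in \catV^{alg}$ preserves $\omega$ on $\Alg$: for $v \in V_i$ with $i \neq 0$, orthogonality with $V_0 = \bc 1_\alg$ forces $\omega(v) = 0$, and since $\alpha_0(v) \in V_i \odot R(\QG)$ expands as $\sum_k v_k \otimes r_k$ with $v_k \in V_i$, we get $(\omega \ot \id)\alpha_0(v) = 0 = \omega(v)1$. Invoking the Haar state $h$ on $R(\QG)$, the identity
\[
(\omega \ot h)\bigl((\alpha_0(b)(1 \otimes s))^* \alpha_0(a)(1 \otimes r)\bigr) \;=\; h\bigl(s^*\,(\omega \ot \id)\alpha_0(b^*a)\,r\bigr) \;=\; \omega(b^*a)\,h(s^*r)
\]
for $a,b \in \Alg$ and $r,s \in R(\QG)$ shows that the assignment $\Lambda_\omega(a) \otimes \Lambda_h(r) \mapsto \Lambda_{\omega \ot h}\bigl(\alpha_0(a)(1 \otimes r)\bigr)$ is inner-product preserving on $\Lambda_\omega(\Alg) \odot \Lambda_h(R(\QG))$; the algebraic cancellation condition (i) of Definition \ref{defcatalg} furnishes dense range, so this extends to a unitary $W$ on $L^2(\alg,\omega) \ot L^2(\QG,h)$. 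Setting $\alpha(a) := W(a \otimes 1)W^*$ defines an isometric $^*$-homomorphism from $\alg$ into $B\bigl(L^2(\alg) \ot L^2(\QG)\bigr)$ which reduces to $\alpha_0$ on the dense subalgebra $\Alg$.

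\textbf{Main obstacle.} The principal technical hurdle is showing that $\alpha(\alg)$ lands in the spatial tensor product $\alg \ot C_r(\QG)$ rather than merely in the ambient $B\bigl(L^2(\alg) \ot L^2(\QG)\bigr)$. On $\Alg$ this is immediate, because $\alpha_0(\Alg) \subset \Alg \odot R(\QG) \subset \alg \ot C_r(\QG)$; since $\alpha$ is norm-continuous and $\Alg$ is norm-dense in $\alg$, the closedness of $\alg \ot C_r(\QG)$ propagates this containment to all of $\alg$. The analytic coassociativity and Podle\'s density conditions likewise follow by continuity from their algebraic counterparts. Mutual inverseness of $F$ and $G$ on objects then reduces to the uniqueness of a continuous $^*$-homomorphic extension from a dense subalgebra, and the morphism correspondence is inherited along the same lines.
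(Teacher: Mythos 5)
Your proposal is correct and follows essentially the same route as the paper: restrict an analytic action to $\Alg$ (using that $\alpha(V_i)\subset V_i\odot R(\QG)$ and unitarity of the induced corepresentation matrices), extend an algebraic action back via the GNS construction for the invariant faithful state, and handle morphisms by restriction/extension between the Hopf $^*$-algebras and their universal completions. The only difference is that where the paper simply cites Lemma 3.1 of Curran for the extension step, you supply that argument directly by constructing the implementing unitary $W$ on $L^2(\alg,\omega)\ot L^2(\QG,h)$.
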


\begin{proof}
The discussion before the definition implies that if $(\alpha,\QG) \in \catV$, then $(\alpha_0:=\alpha|_{\Alg},\QG) \in \catV^{alg}$. On the other hand Lemma 3.1 of \cite{Curran} implies that if $(\alpha_0,\QG) \in \catV^{alg}$, then, as the action $\alpha_0$ preserves the (faithful) state $\omega|_{\Alg}$, and the corresponding GNS completion of $\Alg$ is isomorphic to $\alg$, $\alpha_0$ extends to an action $\alpha:\alg \to \alg \ot C_r(\QG)$ (recall that $C_r(\QG)$ denotes the `reduced version' of $C(\QG)$). It is then easy to check that $(\alpha,\QG)\in \catV$.

Given a morphism $\pi$ in $\catV$ between  $(\alpha_1,\QG_1)$ and $(\alpha_2,\QG_2)$, we know that as it is a compact quantum group morphism, it restricts to a $^*$-homomorphism between respective dense Hopf $^*$-algebras. On the other hand an `algebraic' compact quantum group morphism acting on the level of Hopf $^*$-algebras extends uniquely to a unital $^*$-homomorphism acting between their universal completions and preserving the respective coproducts. The facts that respective restrictions/extensions intertwine the respective actions follow directly from the definitions.
\end{proof}

We are ready to formulate the main theorem. Note that the first part of the proof follows the ideas of \cite{gos} (see also for example \cite{gos2}); as the proof we present is completely algebraic and hence hopefully more accessible, we decided to provide full details.

\begin{tw} \label{qsymV}
Let $(\alg, \omega, (V_i)_{i\in \Ind})$ be a $C^*$-algebra with an orthogonal filtration. There exists a final object in the category $\catV$; in other words there exists a universal compact quantum group $\QG_u$ acting on $\alg$ in a filtration preserving way. We call $\QG_u$ the quantum symmetry group of $(\alg, \omega, (V_i)_{i\in \Ind})$. The canonical action of $\QG_u$ on $\alg$ is faithful.
\end{tw}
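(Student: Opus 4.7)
The plan is to build $\QG_u$ explicitly by presenting $C(\QG_u)$ via generators and relations, adapting the strategy of \cite{gos} to the present algebraic setting via Lemma \ref{isom}. For each $i\in\Ind$ I fix an orthonormal basis $\{e_k^{(i)}\}_{k=1}^{n_i}$ of $V_i$ with respect to the inner product $\la a,b\ra := \omega(a^*b)$, where $n_i := \dim V_i$. For any pair $(\alpha,\QG)\in\catV$ the filtration-preserving condition gives uniquely determined $u_{kl}^{(i)}\in C(\QG)$ with
\[ \alpha(e_l^{(i)}) = \sum_{k=1}^{n_i} e_k^{(i)}\ot u_{kl}^{(i)}, \]
and the coaction identity forces $\Com(u_{kl}^{(i)}) = \sum_m u_{km}^{(i)}\ot u_{ml}^{(i)}$. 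Expanding the multiplication and involution of $\Alg$ in these bases as $e_k^{(i)}e_l^{(j)} = \sum_{p,m} C^{ij,p}_{kl,m}e_m^{(p)}$ and $(e_k^{(i)})^* = \sum_{p,m} D^{i,p}_{k,m} e_m^{(p)}$, and then applying the $*$-homomorphism $\alpha$, translates the $\alg$-structure into a fixed set of polynomial relations $\mathcal{R}$ that every such family of coefficients must satisfy.

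The first key step is to show that each matrix $U^{(i)} := (u_{kl}^{(i)})\in M_{n_i}(C(\QG))$ is \emph{unitary}, providing in particular the uniform bound $\|u_{kl}^{(i)}\|\leq 1$. The state-invariance \eqref{statepreserve} combined with the $*$-homomorphism property yields
\[ \delta_{lm}\,1 = (\omega\ot\id)\alpha(e_l^{(i)*} e_m^{(i)}) = \sum_k u_{kl}^{(i)*}u_{km}^{(i)}, \]
hence $U^{(i)*}U^{(i)} = I$. For the reverse identity I would repeat the computation on the $\alpha$-invariant subspace $V_i^*$ in the basis $\{e_k^{(i)*}\}$, obtaining an identity of the form $G^{(i)} = (U^{(i)})^T\, G^{(i)}\, \overline{U^{(i)}}$, where $G^{(i)}_{kp} := \omega(e_k^{(i)} e_p^{(i)*})$ is the positive invertible Gram matrix of the dual basis (equal to the identity when $\omega$ is a trace). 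This forces $\overline{U^{(i)}}$, hence $U^{(i)}$, to be two-sided invertible, and combined with $U^{(i)*}U^{(i)}=I$ delivers $U^{(i)}U^{(i)*}=I$.

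Next I define $C(\QG_u)$ as the universal unital $C^*$-algebra generated by symbols $u_{kl}^{(i)}$, $i\in\Ind$, $1\leq k,l\leq n_i$, subject to the unitarity relations for each $U^{(i)}$, the normalization $u_{11}^{(0)} = 1$ forced by $V_0=\bc 1_{\alg}$, and the relations $\mathcal{R}$; the uniform norm bound above ensures this universal object is well-defined. The formula $\Com(u_{kl}^{(i)}) := \sum_m u_{km}^{(i)}\ot u_{ml}^{(i)}$ extends by universality to a coassociative unital $^*$-homomorphism $\Com:C(\QG_u)\to C(\QG_u)\ot C(\QG_u)$, and the unitarity of each $U^{(i)}$ delivers the quantum cancellation rules, producing a compact quantum group $\QG_u$. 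Setting $\alpha_u(e_l^{(i)}) := \sum_k e_k^{(i)}\ot u_{kl}^{(i)}$ defines a unital $^*$-homomorphism $\alpha_u:\Alg\to\Alg\odot R(\QG_u)$ — compatibility with the defining relations of $\Alg$ is precisely what $\mathcal{R}$ encodes — which is filtration-preserving and coassociative by construction; Lemma \ref{isom} then promotes $\alpha_u$ to an action of $\QG_u$ on $\alg$.

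Universality is essentially tautological: for any $(\alpha,\QG)\in\catV$, the defining property of $C(\QG_u)$ supplies a unique unital $^*$-homomorphism $\pi:C_u(\QG_u)\to C_u(\QG)$ sending $u_{kl}^{(i)}$ to its $\alpha$-counterpart, and $\pi$ intertwines coproducts and actions on generators, hence everywhere. Faithfulness of $\alpha_u$ follows since the $u_{kl}^{(i)}$ both generate $R(\QG_u)$ and appear as coefficients of $\alpha_u$. The main obstacle I expect is the unitarity argument in the non-tracial case, where the dual basis is not orthonormal and the Gram-matrix detour sketched above is required; a secondary care-point is the well-definedness of a universal $C^*$-algebra with potentially infinitely many generators and relations, which is precisely what the uniform norm bound coming from unitarity underwrites.
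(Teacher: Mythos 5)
Your overall strategy -- presenting $C(\QG_u)$ directly by generators and relations (the coefficient relations $\mathcal{R}$ plus imposed unitarity), instead of the paper's quotient of $\star_{i\in\Ind}A_u(Q_i)$ by the Hopf ideal $\bigcap_T\ker\pi_T$ -- could in principle be carried out, and the construction half of your plan is broadly sound. The genuine gap is in the universality half, at precisely the step you flag as the crux: for an arbitrary $(\alpha,\QG)\in\catV$ you must show that each coefficient matrix $U^{(i)}$ is unitary, otherwise it need not satisfy your imposed relations and no morphism $\pi$ exists. Your Gram identity $G^{(i)}=(U^{(i)})^T G^{(i)}\overline{U^{(i)}}$ is correct, but it only exhibits a one-sided inverse: it says $\overline{U^{(i)}}$ is left invertible (equivalently $(U^{(i)})^T$ is right invertible), with inverse $ (G^{(i)})^{-1}(U^{(i)})^TG^{(i)}$ on one side only. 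Over a noncommutative coefficient $C^*$-algebra, one-sided invertibility of a matrix does not imply invertibility (an isometry such as the unilateral shift is the basic obstruction), and even granting $\overline{U^{(i)}}$ invertible, the step ``hence $U^{(i)}$ is two-sided invertible'' is also unjustified, since neither transposition nor entrywise adjoint is (anti)multiplicative on $M_{k_i}(C(\QG))$. So nothing in your argument excludes $U^{(i)}$ being a proper isometry. Indeed unitarity is not a formal consequence of the state-preservation identities: this is exactly the point where the paper uses condition (ii) of Definition \ref{defcat} (Podle\'s nondegeneracy of $\alpha$) and a Hilbert $C^*$-module argument via Theorem 3.5 of \cite{Lance} to show the isometry $U^{(i)}$ is surjective on $\bc^{k_i}\ot C_r(\QG)$; an alternative repair would be to use that the coefficients lie in $R(\QG)$ and form a corepresentation, so that the counit and antipode yield invertibility, whence unitarity from $U^{(i)*}U^{(i)}=I$. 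Some input of this kind is indispensable -- with only the algebraic identities one cannot rule out Toeplitz-type ``isometric quantum semigroup'' behaviour.

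A secondary, fixable imprecision: ``the unitarity of each $U^{(i)}$ delivers the quantum cancellation rules'' is too quick. Unitarity gives one of the two Woronowicz density conditions; for the other you need (one-sided) invertibility of $(U^{(i)})^T$, i.e. exactly the $Q$-twisted relation which the paper builds in by mapping into Van Daele--Wang's $A_u(Q_i)$. In your presentation this is available, because the Gram identity is itself a consequence of $\mathcal{R}$ together with filtration preservation and $V_0=\bc 1_{\alg}$, but it must be invoked explicitly. With these two points repaired, your route would constitute a legitimate alternative to the paper's argument, whose free-product/quotient construction is designed precisely to package the unitarity and conjugate-invertibility data that your sketch currently assumes rather than proves.
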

\begin{proof}
Observe first that by Lemma \ref{isom} it suffices to show that the category $\catV^{alg}$  has a final object.
 Let us divide the proof into several steps.

\vspace*{0.5 cm}
\noindent I

In the first step we introduce some notation and construct an auxiliary algebra $\Dlg_V$.

Fix for each $i\in \Ind$  an orthonormal basis $\{e_1,\ldots,e_{k_i}\}$  in $V_i$ with respect to the scalar product given by the state $\omega$ (i.e.\ $\omega(e_l^*e_m)=\delta_{lm}1$, $l,m=1,\ldots,k_{i}$) and let $\{f_1,\ldots,f_{k_i}\}$  in $V_{i}^*$ be an orthonormal basis for $V_{i}^*$.
 Consider the family $(e_l^*)_{l=1}^{k_i}$ of elements of $V_{i}^*$. This family is linearly independent, so there exists an invertible matrix $S^{(i)}\in M_{k_i}$ such that
\begin{equation}\label{eq1} e_{l}^*=\sum_{m=1}^{k_i}  S^{(i)}_{lm} f_m \;\;\; l= 1,\ldots,k_i.\end{equation}
Put $Q_i=\overline{S^{(i)}}(S^{(i)})^T \in GL_{k_i}(\bc)$ and let $\dlg_{V}=\star_{i\in \Ind} A_u(Q_i)$, where for each $i\in \Ind$  the algebra $A_u(Q_i)=C_u(U^+(Q_i))$ is the universal unitary algebra of Van Daele and Wang (see \cite{univ}), with the canonical generating set $\{\mathcal{U}^{(i)}_{lm}:l,m=1,\ldots,k_i\}$. The algebra $\dlg_V$ is the algebra of continuous functions on the compact quantum group $\hat{\star}_{i\in \Ind} U^+(Q_i)$ (for more comments on the dual free product of compact quantum groups see for example \cite{bsk1}); the corresponding algebraic free product $R(\hat{\star}_{i\in \Ind} U^+(Q_i))$ will be denoted by $\Dlg_V$.

\vspace*{0.5 cm}
\noindent II

In the second step we show that if $\alpha_0$ is an algebraic action of a quantum group $\QH$ on $\Alg$ and $(\alpha_0,\QH)\in\catV^{alg}$, then the restriction of $\alpha_0$  to a map on $V_i$ determines in a natural way a representation of $\QH$. We also prove that this representation is automatically unitary and construct a $^*$-homomorphism from $\Dlg_V$ to $R_{\alpha}(\QH)$.

Let $(\alpha_0,\QH)\in \catV^{alg}$. Fix $i\in \Ind$ (and skip it from most of the notation in the next paragraph). Condition (iii) in Definition \ref{defcatalg} implies that there exists a matrix $U=(U_{lm})_{l,m=1}^k \in M_k(R(\QH))$
such that
\begin{equation}\label{eq2} \alpha_0(e_l) = \sum_{m=1}^k e_m \ot U_{ml}, \;\;\; l= 1,\ldots,k.\end{equation}
Due to the condition \eqref{statepreserve} $U$ is an isometry; indeed,
\begin{align*}
\delta_{lm} 1_{C_r(\QH)} &= \omega(e_l^*e_m) 1_{C_r(\QH)} = (\omega \ot 1_{C_r(\QH)}) (\alpha_0(e_l)^* \alpha_0(e_m)) \\&=
(\omega \ot 1_{C_r(\QH)})\big( (\sum_{p=1}^k e_p^* \ot U_{pl}^*)  \sum_{p=1}^k e_q \ot U_{qm}) \big) =
\sum_{p,q=1}^k \omega(e_p^* e_q)  U_{pl}^* U_{qm} \\&= \sum_{p=1}^k  U_{pl}^* U_{pm} =(U^*U)_{lm}.
\end{align*}
To show that $U$ is actually a unitary, we need to employ condition (ii) in Definition \ref{defcat} (it is easier here to use Lemma \ref{isom} and pass to the `analytic' version of $\alpha_0$, to be denoted by $\alpha$). Suppose that $U$ is not unitary. Viewing $U$ as an operator on the Hilbert module $\mathcal{M}:= \bc^k \ot C_r(\QH)$, we see that (by Theorem 3.5 in \cite{Lance}) there must exist some element in $\mathcal{M}$ which is not in the range of $U$; in fact its distance from the range of $U$ must be strictly greater than some $\epsilon>0$. In other words there is a sequence $b_1,\ldots,b_k$ of elements in $C_r(\QH)$ such that for all possible sequences $c_1, \ldots, c_k$ of elements in $C_r(\QH)$ we have
\[ b_l \neq \sum_{m=1}^k U_{lm} c_m, \;\; \textup{ for some } l\in \{1,\ldots,m\}.\]
Consider now an element $\mathbf{b}= \sum_{l=1}^k e_l \ot b_l \in V_i \odot C_r(\QH) \subset \alg \ot C_r(\QH)$. The last displayed formula means precisely that $\mathbf{b} \notin \alpha(V_i) (1\ot C_r(\QH))$. Moreover, the remark on the Hilbert module distance means that if $\mathbf{d}\in\alpha(V_i) (1\ot C_r(\QH))$ then \[\|(\omega \ot \id_{C_r(\QH)})((\mathbf{b} - \mathbf{d})^*(\mathbf{b} - \mathbf{d}))\|>\epsilon^2.\]
Consider then any $c\in \tu{Lin}\, \bigcup_{j\in \Ind} V_j$, say $c=\sum_{j\in F} c_j$, where $F$ is a finite subset of $\Ind$ and any family $(b_j)_{j \in F}$ of elements of $C_r(\QH)$.
Put  $\mathbf{d}:=\sum_{j\in F} \alpha(c_j)(1 \ot b_j)$, $\mathbf{d}_i=\alpha(c_i)(1 \ot b_i)$. Then
\[ \|\mathbf{b}-\mathbf{d}\|^2 \geq \|(\omega\ot \id_{C_r(\QH)}) ((\mathbf{b} - \mathbf{d})^*(\mathbf{b} - \mathbf{d}))\| \geq \|(\omega \ot \id_{C_r(\QH)}) ((\mathbf{b} - \mathbf{d}_i)^*(\mathbf{b} - \mathbf{d}_i))\|>\epsilon^2.\]
It follows from this that $\mathbf{b} \notin \alpha(\alg) (1\ot C_r(\QH))$, which is a contradiction.

Consider now $V_i^*$. As $\alpha_0$ preserves also this set, the above proof shows that the matrix $W=(W_{ml})_{m,l=1}^k\in M_k(R(\QH))$ determined by the condition
\begin{equation} \label{eq3} \alpha(f_l) = \sum_{m=1}^k f_m \ot W_{ml}, \;\;\; l= 1,\ldots,k,\end{equation}
is also unitary. A comparison of the formulas \eqref{eq1}-\eqref{eq3} yields the following equality:
\[ WS^{T} = S^T \bar{U},\]
so that the unitarity of $W$ transforms into the following condition:
\[ I= \bar{S}^{-1} \bar{U}^* \bar{S} S^T \bar{U} (S^{T})^{-1} =  S^T \bar{U} (S^{T})^{-1} \bar{S}^{-1} \bar{U}^* \bar{S},\]
or, putting $Q=\bar{S}S^T \in GL_k(\bc)$
\[I = \bar{U}^* Q\bar{U} Q^{-1} =\bar{U} Q^{-1} \bar{U}^*Q. \]

This means that the family $(U_{lm})_{l,m=1}^k$ satisfies the defining relations for the generators of Van Daele's and Wang's universal unitary algebra $A_u(Q_i)$. Hence there exists a unique unital $^*$-homomorphism $\pi_i: A_u(Q_i)\to C_r(\QH)$ such that
\begin{equation} \pi_i(\mathcal{U}_{lm}) = U_{lm}\in R(\QH) \label{piform}\end{equation}
for $l,m=1,\ldots,k$. It is easy to see that $\pi_i$ intertwines respective coproducts; moreover $\pi_i$ maps the $^*$-algebra $\Alg_u(Q_i)$ spanned by the elements of type $\mathcal{U}_{lm}$ into $R(\QH)$ (and even more specifically into $R_{\alpha}(\QH)$). Consider the algebraic free product of all the respective corestrictions of morphisms $\pi_i$: \[\pi_{\alpha,\QH} = \star_{i\in \Ind}\, \pi_i:\Dlg_{V}\to R_{\alpha}(\QH).\]
Note that the image of $\pi_{\alpha,\QH}$ is actually equal to  $R_{\alpha}(\QH)$.  

\vspace*{0.5 cm}
\noindent III

In the third step we introduce another class of $^*$-homomorphisms which generalise actions in $\catV^{alg}$ and establish some formulas satisfied by these homomorphisms.

In the rest of the proof we will only consider algebraic actions and denote them simply by $\alpha$.
We need to consider a larger class of $^*$-homomorphisms from $\Dlg_V$ into algebras of functions on compact quantum groups. This idea comes from \cite{Sabbe}.
Denote the collection of all finite sequences  $(\alpha_1,\QH_1), (\alpha_2,\QH_2),\cdots, (\alpha_k, \QH_k) \in \catV^{alg}$ ($k\in \bn$) by $T_{\cat}$. For each  such sequence $T\in T_{\cat}$ consider the $^*$-homomorphism $\alpha_T:\Alg \to \Alg \odot R(\QH_1)\odot \cdots \odot R(\QH_k)$ defined by
\[ \alpha_T=(\alpha_1 \ot \id_{R(\QH_2)} \ot \cdots \ot \id_{R(\QH_k)}) (\alpha_{k-1} \ot \id_{R(\QH_k)}) \alpha_k\]
($\alpha_T$ should be thought of as reflecting the composition of consecutive actions of $\QH_1, \cdots, \QH_k$ on $\Alg$ -- note however it need not be an action of the group $\QH_1 \times \cdots \times \QH_k$). Similarly for a sequence $T$ as above consider the mapping $ \pi_T:\Dlg_V \to R(\QH_1) \odot \cdots \odot R(\QH_k)$ given by
\[\pi_T=(\pi_{\alpha_1,\QH_1} \ot \cdots \ot \pi_{\alpha_k,\QH_k})\circ  \Com_{k-1},\]
where $\Com_k:\Dlg_V \to \Dlg_V^{\ot k}$ is the usual iteration of the coproduct of $\Dlg_V$ (and $\Com_0:=\id_{\Dlg_V}$).
Note that if $T,S\in T_{\cat}$ and $TS$ denotes the concatenation of the sequences, we have formulas
\begin{equation}  \alpha_{TS} = (\alpha_T \ot \id) \alpha_S, \label{alphaTS} \end{equation}
\begin{equation} \label{piTS} \pi_{TS} = (\pi_T \ot \pi_S)\circ\Com.\end{equation}


Define a linear map $\beta:\Alg \to \Alg \odot \Dlg_V$ via the linear extension of the formula (considering separately each $i\in \Ind$ and the orthogonal basis $e_1,\ldots, e_{k_i} \in V_i$):
\[ \beta(e_l) = \sum_{m=1}^{k_i} e_m \ot \mathcal{U}_{ml}, \;\;\; l=1, \ldots, k_i.\]
Observe that although $\beta$ need not be a $^*$-homomorphism, it is unital and moreover is a coalgebra morphism:
\begin{equation} \label{betacoalg}
(\beta \ot \id_{\Dlg_V}) \beta = (\id_{\Alg} \ot \Com) \beta
 \end{equation}
(it is enough to check the above equality on all the elements $e_l$, where it is elementary). Moreover we have
\begin{equation} \label{betadens} \beta(\Alg) (1 \odot \Dlg_V) = \Alg \odot \Dlg_V;\end{equation}
indeed, it is enough to show that the left hand side contains any element of the form $e_l^{(i)} \odot 1$, where $e_l^{(i)}$ is one of the basis elements of $V_i$. The latter elements can be obtained from the expressions of the type
\[  \sum_{m=1}^{k_i} \beta(e_m^{(i)}) (\mathcal{U}_{lm}^{(i)})^*.\]

Further we have for each $T\in T_C$
\begin{equation} \label{intertwalphapi}  \alpha_T = (\id_{\Alg} \ot \pi_T) \circ \beta.\end{equation}
Indeed, if the length of the sequence $T$ is $1$, then the formula above follows directly from the definition of $\pi_{\alpha,\QH}$ for $(\alpha, \QH)\in \catV^{alg}$.
Further, for any two sequences $T,S \in T_C$ for which \eqref{intertwalphapi} holds we have (using \eqref{alphaTS}, \eqref{piTS} and \eqref{betacoalg})
\begin{align*}
\alpha_{TS} &= (\alpha_T \ot \id)\alpha_S = (((\id_{\Alg} \ot \pi_T) \circ \beta) \ot \id)\circ (\id_{\Alg} \ot \pi_S) \circ \beta \\&=
(\id_{\Alg} \ot \pi_T \ot \pi_S)\circ (\beta \ot \id_{\Dlg_V}) \circ \beta = (\id_{\Alg} \ot \pi_T \ot \pi_S)\circ (\id_{\Alg} \ot \Com) \beta
\\&= (\id_{\Alg} \ot \pi_{TS})\Com,
\end{align*}
so \eqref{intertwalphapi} follows by induction for all sequences in $T_C$.

\vspace*{0.5 cm}
\noindent IV

Here we define the compact quantum group $\QG$ which will turn out to be our universal object.

Let $I_0=\bigcap_{T\in T_{\cat}} \tu{Ker}\, \pi_{T}$ (the class of objects in $\catV^{alg}$ need not be a set, but we can get around this problem in the usual way, identifying isomorphic objects and bounding the dimension of the algebras considered). Then $I_0$ is a two-sided $^*$-ideal in $\Dlg_V$. We will show that it is also a Hopf $^*$-ideal, i.e.\ that if $q: \Dlg_{V} \to  \Dlg_{V}/I_0$ is the canonical quotient map, then $(q \ot q)\Com (I)=\{0\}$. To this end it suffices (via the usual application of slice functionals) to show that if $S, T$ are sequences in $T_{\cat}$  then for each $b\in I$ we have
\begin{equation} \label{Hopfideal} (\pi_T \otimes \pi_S) \Com (b) =0.\end{equation}
This however follows from \eqref{piTS}. 
Thus the unital $^*$-algebra $\Dlg_V/_{I_0}$ is in fact a CQG algebra (in the terminology of \cite{DiK}). Denote the corresponding compact quantum group by $\QG$ (so that
$R(\QG)=\Dlg/_{I_0}$) and the quotient $^*$-homomorphism from $\Dlg$ onto $\Dlg_V/_{I_0}$ by $q$. The construction above shows that
\begin{equation} (q \ot q)\circ  \Com_{\Dlg_V}  = \Com_{\QG} \circ q.\label{copintDG}
\end{equation}

\vspace*{0.5 cm}
\noindent V

In this step we show that the quantum group $\QG$ acts on $\Alg$ in a $\mathcal{V}$-preserving way.

Let $\alpha_u:\Alg \to \Alg \odot R(\QG)$ be given by
\begin{equation}\label{defalphau}  \alpha_u= (\id_{\Alg} \ot q) \circ \beta.\end{equation}
We want to show that $\alpha_u$ is a $^*$-homomorphism. To this end it suffices to show that if $a,b \in \Alg$ then
\[ \beta (a^*) - \beta(a)^* \in \Alg \odot I_0, \;\;\; \beta(ab) - \beta(a) \beta(b) \in \Alg \odot I_0,\]
or, in other words, that for all $T\in T_{\cat}$
\[  (\id_{\Alg} \ot \pi_T) (\beta (a^*) - \beta(a)^*)=0,\;\;\; (\id_{\Alg} \ot \pi_T) ( \beta(ab) - \beta(a) \beta(b))=0.\]
The above formulas are however equivalent (by the fact that $\id_{\Alg} \ot \pi_T$ is a $^*$-homomorphism and by \eqref{intertwalphapi}) to the formulas
\[  \alpha_T (a^*) - \alpha_T(a)^*=0,\;\;\; \alpha_T(ab) - \alpha_T(a) \alpha_T(b)=0,\]
which are clearly true as each $\alpha_T$ is defined as a composition of $^*$-homomorphisms.
The fact that $\alpha_u$ satisfies condition (ii) in Definition \ref{defcatalg} follows by putting together \eqref{defalphau}, \eqref{betacoalg} and \eqref{copintDG}. Condition (iii) in Definition \ref{defcatalg} can be checked directly. Finally the nondegeneracy condition (i) is a consequence of \eqref{defalphau}, \eqref{betadens} and the fact that $q:\Dlg \to R(\QG)$ is a surjective homomorphism.

Hence $(\alpha_u, \QG)\in \catV^{alg}$. The fact that the action $\alpha_u$ is faithful follows from the construction.

\vspace*{0.5 cm}
\noindent VI

Finally we show that the pair $(\alpha_u, \QG)$ is the final object in $\catV^{alg}$.

Consider any object $(\alpha, \QH)$ in $\catV^{alg}$. Recall the map $\pi_{\alg,\QH}:\Dlg_V \to R(\QH)$. The kernel of $\pi_{\alg,\QH}$ is contained in $I_0$; hence there exists a unique map $\pi':\Dlg_V/_{I_0}\to R(\QH)$ such that $\pi_{\alg,\QH} =\pi' \circ q$. Using the fact that $\pi_{\alg,\QH}$ intertwines the coproducts of $\Dlg_V$ and $R(\QH)$ together with the formula \eqref{copintDG} we obtain that  $\pi':R(\QG)\to R(\QH)$ is a morphism of compact quantum groups. Similarly we compute
\[ (\id_{\Alg} \ot \pi')\alpha_u = (\id_{\Alg} \ot \pi')(\id_{\Alg} \ot q)\beta = (\id_{\Alg} \ot \pi_{\alpha,\QH})\circ \beta = \alpha,\]
where the last equality  follows from \eqref{intertwalphapi}. Thus $\pi'$ is a desired morphism in $\catV^{alg}$ between $(\alpha, \QH)$ and $(\alpha_u, \QG)$. Its uniqueness can be easily checked using the fact that the elements of the type $q(\mathcal{U}_{lm}^{(i)})$, $i\in \Ind$, $l,m =1,\ldots,k_i$ generate $R(\QG)$ as a $^*$-algebra.
\end{proof}

\begin{rem}
As mentioned before, the first part of the proof is inspired by the arguments in Section 4 of \cite{gos}. Here however we avoid any references to the Dirac operator and work directly with the filtration of the underlying $C^*$-algebra, which makes this framework in principle more general (see the beginning of the next section). The proof above is also self-contained and purely algebraic, which is possible due to Lemma \ref{isom}, and leads to a simplification of  certain technical arguments. Note however that this algebraic approach does not seem to be suitable for working with more general notion of \emph{quantum families of maps}, considered in \cite{gos}.
\end{rem}

The following corollary corresponds to the obvious classical fact that each group acting faithfully on a given structure can be viewed as a subgroup of the full symmetry group of this structure.

\begin{cor} \label{faithfulsubgroup}
Let $(\alg, \omega, (V_i)_{i\in \Ind})$ be a $C^*$-algebra with an orthogonal filtration and let $(\alpha_u,\QG_u)$ be the universal object in $\catV$.
If $(\alpha,\QG) \in \catV$ and the action $\alpha$ is faithful, then the morphism $\pi_{\alpha}:R(\QG_u)\to R(\QG)$ constructed in Theorem \ref{qsymV}  is surjective. In other words, $\QG$ is a quantum subgroup of $\QG_u$.
\end{cor}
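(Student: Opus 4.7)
The plan is to unwind the construction of $\pi_\alpha$ given in the proof of Theorem \ref{qsymV} and track where its image lands. Recall from step II that to any $(\alpha, \QG) \in \catV^{alg}$ we associated a $^*$-homomorphism $\pi_{\alpha, \QG}: \Dlg_V \to R(\QG)$ obtained as the algebraic free product of the maps $\pi_i: A_u(Q_i) \to C_r(\QG)$ defined in \eqref{piform}. The crucial observation already recorded there is that the image of $\pi_{\alpha, \QG}$ is precisely $R_\alpha(\QG)$, the Hopf $^*$-subalgebra of $R(\QG)$ generated by the coefficients $(f \ot \id)\alpha(v)$, $v \in V_i$, $f \in V_i'$; this is immediate from the defining formula \eqref{eq2}, which expresses the generators $U_{ml}$ precisely as such coefficients.

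Next, I would invoke step VI of the proof of Theorem \ref{qsymV}, where the morphism $\pi' = \pi_\alpha: R(\QG_u) \to R(\QG)$ was characterised by the factorisation
\[ \pi_{\alpha, \QG} = \pi_\alpha \circ q, \]
with $q: \Dlg_V \to \Dlg_V/I_0 = R(\QG_u)$ the quotient map. Since $q$ is surjective, we immediately get
\[ \tu{Im}\, \pi_\alpha = \tu{Im}\, \pi_{\alpha, \QG} = R_\alpha(\QG). \]

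Finally, the faithfulness assumption on $\alpha$ means by definition that $R_\alpha(\QG)$ is dense in $C(\QG)$, which by the result of \cite{DiK} cited right before Definition \ref{defcatalg} forces $R_\alpha(\QG) = R(\QG)$. Combining this with the previous display yields $\tu{Im}\, \pi_\alpha = R(\QG)$, i.e.\ $\pi_\alpha$ is surjective at the Hopf $^*$-algebra level. This is exactly the algebraic condition for $\QG$ to be a quantum subgroup of $\QG_u$ (one may then extend $\pi_\alpha$ to a surjection $C_u(\QG_u) \to C_u(\QG)$ between the universal $C^*$-algebraic completions). There is no real obstacle here; the corollary is essentially a reformulation of what was already shown inside the proof of Theorem \ref{qsymV}, the only step requiring a word of justification being the identification of $\tu{Im}\, \pi_{\alpha, \QG}$ with $R_\alpha(\QG)$.
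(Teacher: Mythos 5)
Your argument is correct and follows exactly the route of the paper's own (very brief) proof: the paper also simply observes that by construction the image of $\pi_{\alpha}$ contains $R_{\alpha}(\QG)$, which by faithfulness equals $R(\QG)$. Your write-up merely makes explicit the factorisation $\pi_{\alpha,\QG}=\pi_{\alpha}\circ q$ and the identification of $\tu{Im}\,\pi_{\alpha,\QG}$ with $R_{\alpha}(\QG)$, both of which are already recorded in steps II and VI of the proof of Theorem \ref{qsymV}.
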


\begin{proof}
It suffices to observe that it follows from the construction in the proof of Theorem \ref{qsymV} that the image of the morphism $\pi_{\alpha}$ contains $R(\QG)_{\alpha}$.
\end{proof}



Below we record certain special cases in which some properties of the universal quantum symmetry group $\QG_u$ follow directly from certain properties of the filtration.

\begin{tw} \label{gentrace}
Let $(\alg, \omega, (V_i)_{i\in \Ind})$ be a $C^*$-algebra with an orthogonal filtration and let $\QG$ be its quantum symmetry group, with a corresponding action $\alpha:\alg \to \alg \ot C(\QG)$. The following implications hold:
\begin{rlist}
\item if $\omega$ is a trace then $\QG_u$ is a compact quantum group of Kac type;
\item if there exists a finite set $F\subset \Ind$ such that the union of subspaces $\bigcup_{i\in F} V_i$  generates $\alg$ as a $C^*$-algebra, then $\QG_u$ is a compact matrix quantum group;
\item if there exists $i\in \Ind$ such that $V_i$ generates $\alg$ as a $C^*$-algebra and $\{e_{1},\cdots,e_k\}$ is an orthonormal basis of $V_i$ with respect to the scalar product determined by $\omega$ (so that $\omega(e_l^* e_m)= \delta_{lm} 1$ for $l,m=1,\ldots,k$), then the matrix  $U=(U_{lm})_{l,m=1}^k$ of elements of $C(\QG)$ determined by the condition
\[ \alpha(e_l) = \sum_{m=1}^k e_m \ot U_{ml}, \;\;\; j= 1,\ldots,k,\]
 is a fundamental unitary representation of $\QG$ (and $\bar{U}$ is also unitary). In particular $\QG$ is a quantum subgroup of $U^+_k$.
\end{rlist}

\end{tw}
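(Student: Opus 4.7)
The plan is to derive each of the three claims from the explicit construction of $\QG_u$ given in Theorem \ref{qsymV}, where $R(\QG_u)$ arises as a Hopf $*$-ideal quotient $q\colon \dlg_V \twoheadrightarrow R(\QG_u)$ of the dual free product $\dlg_V = \star_{i\in\Ind} A_u(Q_i)$, and where $q(\mathcal{U}^{(i)}_{lm}) = U^{(i)}_{lm}$ forms a generating set for $R(\QG_u)$ as a $*$-algebra (by faithfulness of $\alpha_u$, established at the end of the proof of Theorem \ref{qsymV}). Each of the three implications then translates into a structural statement about the $U^{(i)}$'s and the relations they inherit from the $A_u(Q_i)$'s.

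For (i), if $\omega$ is tracial then $\omega(e_l e_m^*) = \omega(e_m^* e_l) = \delta_{lm}$, so $\{e_1^*,\ldots,e_{k_i}^*\}$ is itself an orthonormal basis of $V_i^*$. Choosing $f_m := e_m^*$ in Step I of the proof of Theorem \ref{qsymV} yields $S^{(i)} = I$ and hence $Q_i = I_{k_i}$, so $A_u(Q_i) = C(U^+_{k_i})$, which is of Kac type. Dual free products of Kac CQGs are again Kac (the Haar state is the convolution of the tracial Haar states and remains a trace), and a Hopf $*$-ideal quotient of a Kac CQG is Kac (as $S^2 = \id$ descends). Hence $\QG_u$ is Kac.

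For (ii), form the block-diagonal unitary representation $U := \bigoplus_{i\in F} U^{(i)}$ and let $B \subset C(\QG_u)$ be the unital $C^*$-subalgebra generated by its entries. By faithfulness of $\alpha_u$ it suffices to show that every coefficient space $\Blg_j$, $j\in \Ind$, lies in $B$. Given $v \in V_j$, the hypothesis that $\bigcup_{i\in F} V_i$ generates $\alg$ as a $C^*$-algebra produces noncommutative polynomials $p_n$ in elements of $\bigcup_{i\in F}(V_i \cup V_i^*)$ converging to $v$ in norm; since $\alpha_u$ is a $*$-homomorphism and $\alpha_u(V_i) \subset V_i \odot \Blg_i$ for $i\in F$, the entries of $\alpha_u(p_n)$ lie in $B$, and contracting $\alpha_u(v) = \lim_n \alpha_u(p_n) \in \alg \ot C(\QG_u)$ by a bounded coefficient functional on $\alg$ transfers the inclusion to $\Blg_j \subset B$. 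Thus $B \supseteq R(\QG_u)$, and by density $B = C(\QG_u)$, exhibiting $\QG_u$ as a compact matrix quantum group with fundamental representation $U$.

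Part (iii) is the $F=\{i\}$ specialisation of (ii), yielding that $U = U^{(i)}$ is a fundamental unitary representation. The additional assertion that $\bar{U}$ is also unitary is the main obstacle. My plan is to invoke Step II of the proof of Theorem \ref{qsymV} applied to $V_i^*$ (which is preserved automatically by $\alpha_u$) with its orthonormal basis $\{f_l\}$, producing a unitary $W$ linked to $\bar{U}$ via the relation $W S^T = S^T \bar{U}$ derived there; the orthonormality hypothesis on $\{e_l\}$ together with the hypothesis that $V_i$ generates $\alg$ forces the twist matrix $S^{(i)}$ to be unitary (so that, up to this unitary conjugation, $\bar{U}$ coincides with the unitary $W$), and hence $\bar{U}$ itself is unitary. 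Granted this, the universal property of $A_u(I_k) = C(U^+_k)$ produces a CQG morphism $C(U^+_k) \to C(\QG_u)$, which is surjective since the entries of $U$ generate $C(\QG_u)$ by (ii); this realises $\QG_u$ as a quantum subgroup of $U^+_k$.
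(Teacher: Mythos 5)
Your treatments of (i) and (ii) are fine: (i) is exactly the paper's argument (traciality makes $\{e_1^*,\ldots,e_{k_i}^*\}$ orthonormal, so one may take $S^{(i)}=I$ and hence $Q_i=I_{k_i}$), supplemented by the standard facts that a dual free product of the Kac-type groups $U^+_{k_i}$ and any CQG-algebra quotient of it remain of Kac type; and (ii) correctly fills in the slicing/density details which the paper leaves implicit. The genuine gap is in (iii), at the step asserting that orthonormality of $\{e_l\}$ together with the hypothesis that $V_i$ generates $\alg$ forces $S^{(i)}$ to be unitary. Unitarity of $S^{(i)}$ says precisely that the adjoint family $\{e_1^*,\ldots,e_k^*\}$ is again orthonormal, i.e.\ that $\omega(e_l e_m^*)=\delta_{lm}$; this does not follow from $\omega(e_l^* e_m)=\delta_{lm}$, and the generation hypothesis has no bearing on it whatsoever. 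What does give it is the trace assumption of part (i): this is exactly the paper's one-line proof, which reads (iii) together with traciality of $\omega$, so that $Q_i=I_k$, Step II of Theorem \ref{qsymV} yields a surjection $A_u(I_k)=C_u(U^+_k)\to R(\QG_u)$, and both the unitarity of $\bar{U}$ and the realisation of $\QG_u$ as a quantum subgroup of $U^+_k$ drop out.

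Moreover this step cannot be repaired: without traciality the conclusion of (iii) is false, so no argument from generation alone can succeed. Take $\alg=M_2$ with a faithful non-tracial state $\omega$ and the two-term filtration $V_0=\bc 1$, $V_1=\ker\omega$. Then $V_1$ generates $M_2$ as a $C^*$-algebra, and preserving this filtration is the same as preserving $\omega$ (a state-preserving action maps $\ker\omega$ into $\ker\omega\odot C(\QG)$), so $\QG_u$ is Wang's quantum automorphism group $\QG_{\textup{aut}}(M_2,\omega)$, which is not of Kac type when $\omega$ is not a trace. Since every quantum subgroup of $U^+_3$ is of Kac type, $\QG_u$ is not a quantum subgroup of $U^+_3$, and $\bar{U}$ cannot be unitary; equivalently, $\omega(e_l e_m^*)\neq\delta_{lm}$ for every orthonormal basis of $V_1$, so $S^{(1)}$ is never unitary. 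Thus in (iii) you must invoke traciality of $\omega$, exactly as you did in (i); with that substitution the remainder of your argument (the relation $WS^T=S^T\bar{U}$, unitarity of $W$, the universal property of $A_u(I_k)$, and the surjectivity supplied by (ii)) goes through and coincides with the paper's intended proof.
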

\begin{proof}
It suffices to look at the proof of Theorem \ref{qsymV} and note that if $\omega$ is a trace and $\{e_1,\ldots,e_k\}$ is an orthonormal basis of $V_i$ then $\{e_1^*,\ldots,e_k^*\}$ is an orthonormal basis of $V_i^*$, so that the matrix $Q_i$ appearing in that proof is equal to $I_k$.
\end{proof}

We finish the section by discussing a functorial property of the construction of the quantum symmetry groups given in Theorem \ref{qsymV}.

Let $\alg$ be a unital $C^*$-algebra equipped with a faithful state $\omega$ and two orthogonal (with respect to $\omega$) filtrations $\mathcal{V}$ and $\mathcal{W}$, with indexing sets respectively $\mathcal{I}$ and $\mathcal{J}$. We say that $\mathcal{W}$ is a \emph{subfiltration} of $\mathcal{V}$ if for each $j \in \Jnd$ there exists $i \in \Ind$ such that $W_j\subset V_i$. We have the following  corollary of Theorem \ref{qsymV}.

\begin{cor} \label{generatedfiltrations}
Let $\alg$ be a unital $C^*$-algebra equipped with a faithful state $\omega$ and two orthogonal (with respect to the same state $\omega$) filtrations $\mathcal{V}$ and $\mathcal{W}$, such that $\mathcal{W}$ is a subfiltration of $\mathcal{V}$. 
Denote the respective quantum symmetry groups by $\QG_{\mathcal{V}}$ and $\QG_{\mathcal{W}}$. 
Then $\QG_{\mathcal{W}}$ is a quantum subgroup of   $\QG_{\mathcal{V}}$.
\end{cor}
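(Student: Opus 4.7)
The plan is to apply the universal property of $\QG_{\mathcal{V}}$ (Theorem \ref{qsymV}) together with Corollary \ref{faithfulsubgroup}. Concretely, I will show that the canonical action $\alpha_{\mathcal{W}}$ of $\QG_{\mathcal{W}}$ on $\alg$, which preserves $\mathcal{W}$ by construction, in fact also preserves the coarser filtration $\mathcal{V}$, placing $(\alpha_{\mathcal{W}},\QG_{\mathcal{W}})$ in $\catV$ (with $\mathcal{V}$ in the subscript). Once this is established, the universal property of $\QG_{\mathcal{V}}$ yields a quantum group morphism $\pi: C_u(\QG_{\mathcal{V}}) \to C_u(\QG_{\mathcal{W}})$, and Corollary \ref{faithfulsubgroup} (applied to the faithful action $\alpha_{\mathcal{W}}$) promotes $\pi$ to a surjection, i.e., a realisation of $\QG_{\mathcal{W}}$ as a quantum subgroup of $\QG_{\mathcal{V}}$.

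The main obstacle is that the definition of subfiltration only demands that each $W_j$ sits inside some $V_i$; it does not a priori say that each $V_i$ is an orthogonal sum of those $W_j$ contained in it. The first and hardest step is to upgrade the subfiltration condition to this refinement property, namely
\[
V_i \;=\; \bigoplus_{j\in \Jnd,\, W_j \subset V_i} W_j \qquad (i\in\Ind).
\]
I would argue inside the GNS Hilbert space $\Hil$ of $\omega$. Pick $u \in V_i$ orthogonal (with respect to $\omega$) to every $W_j \subset V_i$. For any other $W_k$ with $W_k \not\subset V_i$, the subspace $W_k$ lies in some $V_{i'}$ with $i' \neq i$, so $W_k \perp V_i$ by orthogonality of $\mathcal{V}$, and in particular $u \perp W_k$. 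Consequently $u$ is $\omega$-orthogonal to every element of $\textup{Lin}(\bigcup_{j \in \Jnd} W_j)$, which is dense in $\alg$ and hence has dense image in $\Hil$. Thus $u = 0$ in $\Hil$; because $\omega$ is faithful, the GNS map $\alg \to \Hil$ is injective, so $u = 0$ in $\alg$. This forces $U_i := V_i \cap (\bigoplus_{W_j \subset V_i} W_j)^\perp = \{0\}$, yielding the desired refinement.

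With the refinement at hand, the rest is formal. The universal action $\alpha_{\mathcal{W}}$ satisfies $\alpha_{\mathcal{W}}(W_j) \subset W_j \odot R(\QG_{\mathcal{W}})$ for every $j$; summing over those $j$ with $W_j \subset V_i$ and using the first step gives
\[
\alpha_{\mathcal{W}}(V_i) \;\subset\; \bigoplus_{W_j\subset V_i} \big(W_j \odot R(\QG_{\mathcal{W}})\big) \;=\; V_i \odot R(\QG_{\mathcal{W}}),
\]
so $(\alpha_{\mathcal{W}}, \QG_{\mathcal{W}}) \in \catV$ with respect to $\mathcal{V}$. The universal property of $(\alpha_{\mathcal{V},u}, \QG_{\mathcal{V}})$ then yields a unique compact quantum group morphism $\pi: C_u(\QG_{\mathcal{V}}) \to C_u(\QG_{\mathcal{W}})$ intertwining the coproducts and the actions. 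Since $\alpha_{\mathcal{W}}$ is faithful by the last sentence of Theorem \ref{qsymV}, Corollary \ref{faithfulsubgroup} ensures $\pi$ is surjective, which is precisely the statement that $\QG_{\mathcal{W}}$ is a quantum subgroup of $\QG_{\mathcal{V}}$.
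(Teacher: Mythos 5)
Your proposal is correct and follows essentially the same route as the paper: the paper's proof simply observes that the canonical action of $\QG_{\mathcal{W}}$ lies in $\catV$ and is faithful, then invokes Corollary \ref{faithfulsubgroup}. The only difference is that you spell out the implicit observation — namely that density of $\textup{Lin}(\bigcup_j W_j)$ and faithfulness of $\omega$ force each $V_i$ to be the orthogonal sum of the $W_j$ it contains — which is a correct and welcome elaboration, not a change of method.
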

\begin{proof}
It suffices to observe that $\QG_{ \mathcal{W}}$ with its canonical action $\alpha$ on $\alg$ is an object in $\catV$ and that, due to Theorem \ref{qsymV}, the action $\alpha$ is faithful. Corollary \ref{faithfulsubgroup} ends the proof.
\end{proof}

It is easy to check that if given two orthogonal filtrations of a fixed $C^*$-algebra $\alg$ (with respect to the same state $\omega$) we define the family of sets $\mathcal{V}\vee \mathcal{W}$ as consisting of all sets of the type $V_i \cap W_j, i \in \mathcal{I}, j \in \mathcal{J}$, we obtain another orthogonal filtration of $\alg$, which can be thought of as the filtration generated by $\mathcal{V}$ and $\mathcal{W}$ (we can work with a smaller indexing set $\{(i,j) \in \mathcal{I} \times \mathcal{J}: V_i \cap W_j \neq \emptyset\}$ and identify $(0,0)\in I\times J$ as its distinguished element). By the above corollary $\QG_{\mathcal{V}\vee \mathcal{W}}$ is a quantum subgroup of  both  $\QG_{\mathcal{V}}$ and $\QG_{\mathcal{W}}$.


\section{Algebras with orthogonal filtrations and related quantum symmetry groups}

In this section we first explain when the quantum symmetry groups associated with $C^*$-algebras equipped with orthogonal filtrations can be viewed as quantum isometry groups of noncommutative manifolds, as introduced in \cite{gos} and \cite{JyotDeb2}. Then we present examples of orthogonal filtrations arising in the finite-dimensional setting, in the context of Cuntz-Krieger algebras and when considering ergodic compact (quantum) group actions; in each case we briefly discuss resulting quantum symmetry groups.

\subsection{Relations to the spectral triple framework}

It is natural to ask under what circumstances quantum symmetry groups defined in the previous section can be put directly into the framework of quantum isometry groups considered in \cite{gos} and \cite{JyotDeb2}. For that we need the following definition.

\begin{deft} \label{Diracfilt}
Let  $(\alg, (V_m)_{m=0}^{\infty})$ be a $C^*$-algebra with an orthogonal filtration. We say that this filtration is of spectral triple type if
 for each $k\in \bn$ there exists $M\in \bn$ such that for all $l \in \bn$
\begin{equation} \label{inv1} V_k V_l \subset \bigcup_{j=0}^{l+M} V_j, \;\;\;\;\;  V_k^* V_l \subset \bigcup_{j=0}^{l+M} V_j.\end{equation}
\end{deft}

The second condition in the definition above follows from the first one if for each $i\in \Ind$ there exists $i^*\in \Ind$ such that $V_i^*=V_{i^*}$. This is the case for all examples studied in this paper.

\begin{tw} \label{Diracequiv}
Let  $(\alg, (V_m)_{m=0}^{\infty})$ be a $C^*$-algebra with an orthogonal filtration of spectral triple type. Then there exists a spectral triple $(\Alg,D,\Hil)$ on $\alg$ such that the category $\catV$ coincides with the category of quantum groups of isometries of the triple $(\Alg,D,\Hil)$ considered in \cite{JyotDeb2}. In particular the quantum symmetry group of $(\alg, (V_m)_{m=0}^{\infty})$ coincides with the quantum isometry group of $(\Alg,D,\Hil)$ constructed in \cite{JyotDeb2}.
\end{tw}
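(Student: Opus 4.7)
The plan is to construct the spectral triple $(\Alg,D,\Hil)$ directly from the orthogonal filtration. Take $\Hil$ to be the GNS Hilbert space $L^2(\alg,\omega)$, so that $\Alg$ acts on $\Hil$ by left multiplication; the orthogonality conditions in Definition \ref{orthfilt} yield an orthogonal decomposition $\Hil=\bigoplus_{m=0}^{\infty} V_m$. Define $D:=\sum_{m=0}^{\infty} m\cdot P_m$, where $P_m$ is the orthogonal projection onto $V_m$. Since each $V_m$ is finite-dimensional and the eigenvalues tend to infinity, $D$ is self-adjoint with compact resolvent; distinctness of the eigenvalues moreover ensures that the eigenspaces of $D$ are exactly the subspaces $V_m$.

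Next I would verify the bounded-commutator condition using the hypothesis that the filtration is of spectral triple type. The key observation is that for $a\in V_k$ and $\xi\in V_l$ the two containments in \eqref{inv1}, combined with orthogonality, force $a\xi\in\bigoplus_{j=\max(0,l-M)}^{l+M} V_j$: the upper bound is the first inclusion applied directly, while the lower bound is obtained by testing $\langle a\xi, V_{l'}\rangle=\langle\xi,a^*V_{l'}\rangle$ for $l'<l-M$ and invoking the second inclusion together with orthogonality. Writing $[D,a]$ as a finite sum $\sum_{n=-M}^{M} T_n$ of degree-shift operators $T_n\colon V_l\to V_{l+n}$, each $T_n$ satisfies $\|T_n\xi_l\|\le M\|a\xi_l\|\le M\|a\|\,\|\xi_l\|$ on every $V_l$ (since within the allowed band $D$ changes by at most $M$), and by mutual orthogonality of the $V_{l+n}$ as $l$ varies these estimates assemble to $\|T_n\|\le M\|a\|$ as operators on $\Hil$. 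Hence $[D,a]$ extends to a bounded operator for every $a\in\Alg$ (a general element being a finite linear combination of elements in various $V_k$), completing the verification that $(\Alg,D,\Hil)$ is a spectral triple.

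Finally I would identify the two categories. In the framework of \cite{JyotDeb2}, a quantum family of orientation-preserving isometries of $(\Alg,D,\Hil)$ is a unitary corepresentation $U$ on $\Hil\otimes C(\QG)$ that commutes with $D\otimes 1$ and whose adjoint action preserves $\alg$. Because the eigenspaces of $D$ are precisely the $V_m$, commutation with $D\otimes 1$ is equivalent to $U(V_m\otimes C(\QG))\subset V_m\otimes C(\QG)$ for every $m$, which in turn is equivalent to the induced action $\alpha$ of $\QG$ on $\alg$ satisfying $\alpha(V_m)\subset V_m\odot C(\QG)$, i.e.\ $(\alpha,\QG)\in\catV$. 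Conversely, an object $(\alpha,\QG)\in\catV$ preserves $\omega$ by \eqref{statepreserve}, hence extends to a unitary $U$ on $\Hil\otimes C(\QG)$ (via the standard GNS construction for reduced actions), and filtration preservation translates to commutation with $D\otimes 1$. Morphisms in both categories are compact quantum group morphisms intertwining the given data, and the correspondences are manifestly functorial, giving an isomorphism of categories and therefore the equality of universal objects.

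The main obstacle I expect is the careful passage between the unitary-corepresentation viewpoint of \cite{JyotDeb2} and the $^*$-homomorphism viewpoint of $\catV$: one must verify that the adjoint action of $U$ genuinely restricts to a $C^*$-algebraic action on $\alg$ (rather than only on a dense subspace of $\Hil$), and conversely that every $\catV$-action extends to a bona fide unitary on the Hilbert module $\Hil\otimes C(\QG)$ commuting with $D\otimes 1$. Both directions, however, are essentially forced by the GNS realisation of $\alg$ inside $\Hil$ and by Lemma \ref{isom}, which allows all computations to be performed at the algebraic level of $\Alg$.
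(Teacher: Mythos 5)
Your construction is the same as the paper's: $D=\sum_m mP_m$ on the GNS space, with the spectral-triple-type condition forcing $P_n\pi(a)P_m=0$ outside the band $|n-m|\le M$ (the lower bound coming, exactly as you argue, from pairing with $a^*V_{l'}$), so that $[D,\pi(a)]$ is a bounded band operator. Your concluding identification of $\catV$ with the category of \cite{JyotDeb2} via eigenspace preservation and GNS implementation of state-preserving actions is precisely the step the paper declares ``straightforward to check,'' so the proposal is correct and follows the paper's route.
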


\begin{proof}
Let $(\pi,\Hil,\Omega)$ be the GNS triple corresponding to the state $\omega$, and let $\iota$ denote the canonical embedding of $\alg$ into $\Hil$. The spaces $\iota(V_n)$ are finite-dimensional, mutually orthogonal subspaces of $\Hil$. Denote the corresponding orthogonal projections onto $\Hil_n$ by $P_n$ and put $D=\sum_{n=0}^{\infty} nP_n$. Then $D$ is a densely defined, essentially selfadjoint operator on $\Hil$ with compact resolvent. If $k\in \bn, a \in V_k$, then the operator $\pi(a)$ maps the space $\Hil_0:=\bigcup_{n\in \bn} \iota(V_n)$ into itself and moreover on this subspace
\[ [D,\pi(a)] = \sum_{n,m=0}^{\infty} [D, P_n \pi(a) P_m] = \sum_{n,m=0}^{\infty} (n-m) P_n \pi(a) P_m.\]
The first condition in \eqref{inv1} implies that $P_n \pi(a) P_m =0$ if $n>m+M$, and the second condition in \eqref{inv1} implies that $P_n \pi(a) P_m =0$ if $m>n+M$, so that in fact the sum above reduces to
\[  [D,\pi(a)] = \sum_{n,m\in \bn_0, |n-m|\leq M}  \sum_{n,m=0}^{\infty} (n-m) P_n \pi(a) P_m\]
and the commutator is bounded. Hence $(\Alg,D,\Hil)$ is a spectral triple on $\alg$. It is now straightforward to check that this triple fits into the framework considered in Section 2 of \cite{JyotDeb2}; moreover the category $\widehat{\mathbf{C}}$ introduced in Definition 2.26 of that paper can be easily seen to coincide with category $\catV$ studied here. This ends the proof.
\end{proof}

\begin{rem}
Note that the theorem above provides only sufficient conditions for the quantum symmetry group of an orthogonal filtration to arise as a quantum isometry group of a spectral triple. In general it seems very difficult to show that for a given orthogonal filtration the quantum symmetry group does not fit into the framework of \cite{JyotDeb2}; it remains however true that in many examples (see the rest of the paper) orthogonal filtrations arise in a natural way, whereas the (noncommutative) geometric structure is not apparent.
\end{rem}

\subsection{Finite-dimensional case}

If $\alg$ is a finite-dimensional algebra equipped with an orthogonal filtration $\mathcal{V}$, then we are in the spectral triple framework (by Theorem \ref{Diracequiv}). Moreover, the corresponding quantum symmetry group $\QG_{\mathcal{V}}$ is a quantum subgroup of the quantum automorphism group $\QG_{\textup{aut}}(\alg, \omega)$ studied in \cite{wa2}. In particular when $\alg =\bc^n$ for some $n \in \bn$, $\QG_{\mathcal{V}}$ is a quantum subgroup of the quantum permutation group $S^+_n$ (note that any compact quantum group acting on $\bc^n$ automatically preserves the canonical trace). In the latter case we can however be more specific, as described below.

Let $\alg=\bc^n$ be equipped with an orthogonal filtration $\mathcal{V}$ and let $\alpha:\bc^n \to \bc^n \ot C(\QH)$ be an action of a compact quantum group $\QH$ on $\bc^n$. Consider the \emph{magic unitary} (a unitary matrix whose entries are orthogonal projections) $U\in M_n (\QH)$ determined by
\[ \alpha(e_l) = \sum_{k=1}^n e_k \ot U_{kl}, \;\;\; l=1,\ldots,n\]
($e_1, \ldots, e_n$ denotes above the canonical basis of $\bc^n$). Let for each $i\in I$ the symbol $P_i$ denote the orthogonal projection in $B(\bc^n)$ onto the subspace $V_i$. Then it is easy to check that the action $\alpha$ preserves the filtration $\mathcal{V}$ if and only if the unitary $U$ commutes with each of the matrices $P_i$. We can replace this sequence of commutation conditions: if we enumerate the index set $\Ind$ (say from $1$ to $l$), the action $\alpha$ preserves the filtration $\mathcal{V}$ if and only if it commutes with the (self-adjoint) matrix $Q=\sum_{i=1}^l i P_i$.  Note that it is important in the above that we write down the matrix of $P_i$ in the canonical basis of $\bc^n$ (otherwise $U$ would still be unitary, but could cease to be a magic one). This implies that we are in a framework very similar to that of \cite{banmet}, where the quantum isometry groups of finite metric spaces were introduced, and the corresponding condition was expressed in terms of the commutation with the matrix given by the metric.

Consider a particular example.

\begin{example} \label{QISOZ4}
Let $n=4$, and for $k=0,1,2,3$ put $f_k=\sum_{l=1}^4 \exp(\frac{kl\pi}{2}) e_l$.
Let a filtration $\mathcal{V}$ of $\bc^4$ be given by $V_0=\{f_0\}, V_1=\{f_1,f_3\}, V_2=\{f_2\}$. By the arguments above the quantum isometry group $\QG_{\mathcal{V}}$ is a quantum subgroup of $S^+_4$, and the conditions on the magic unitary defining a fundamental unitary representation $U$ of $\QG_{\mathcal{V}}$ are given by the commutation with projections onto respective $V_i$'s. Here however the commutation with all three of these projections follows from the commutation with $P_2$ (as the commutation with $P_0$ is a consequence of the fact that $U$ is a unitary and $P_1=I-P_0-P_2$). As
\[P_2 = \frac{1}{4} \left(\begin{array}{cccc} 1 & -1 & 1 & -1 \\ -1 &1 &-1&1 \\ 1 & -1 & 1 & -1 \\ -1 &1 &-1&1\end{array}\right),\]
the fact that a magic unitary  $U$ commutes with $P_2$ is equivalent to the fact that it commutes with
\[ \left(\begin{array}{cccc} 0 & 0 & 1 & 0\\ 0 &0 &0&1 \\ 1 & 0 & 0 & 0 \\ 0 &1 &0&0\end{array}\right),\]
which is the adjacency matrix of a graph made of two segments. Now the Table in Section 7 of \cite{BanBic} implies that $\QG_{\mathcal{V}}$ is the free wreath product $\bz_2 \wr_* \bz_2$.
\end{example}

We will return to this example in Section \ref{group}.


\subsection{AF case - Christensen-Ivan spectral triples}
A natural extension of the finite-dimensional case is given by AF algebras, i.e.\ inductive limits of finite-dimensional algebras. These can be equipped with natural orthogonal filtrations of spectral triple type and hence also with natural spectral triples, see \cite{Chrivan}. The corresponding quantum isometry groups were studied in \cite{bgs}. We refer to that paper for details; here note only that the resulting quantum isometry groups are themselves inductive limits of quantum isometry groups of finite-dimensional algebras (see Theorem 1.2 of \cite{bgs}) and can be thought of as quantum symmetry groups of the associated Bratteli diagrams.

\subsection{Cuntz-Krieger algebras}

Let $n \in \bn$ and $A$ be an $n$ by $n$ matrix of $0$s and $1$s in which each row and column is non-zero. Recall that the Cuntz-Krieger algebra $\OA$ first defined in \cite{CK} is the universal $C^*$-algebra generated by partial isometries $S_1,\ldots,S_n$ such that for all $i,j=1,\ldots,n$, $i\neq j$,
\[ S_i^*S_j=0, \;\;\; S_i^*S_i = \sum_{k=1}^n A_{ik} S_k S_k^*.\]
In particular if $A$ is a matrix with all entries equal to $1$ we obtain the Cuntz algebra $\On$. We will use a standard multi-index notation for compositions of the generating partial isometries, so that if $\mu=(\mu_1,\ldots, \mu_k)$ is a sequence of indices with values in $\{1,\ldots,n\}$, we write $S_{\mu}=S_{\mu_1}\cdots S_{\mu_k}$.

The algebra $\OA$ is equipped with a  gauge action $\gamma:\bt\to\textup{Aut}(\OA)$ determined by the formula
\[ \gamma_t (S_i) = S_i, \;\;\; t\in \bt,  i=1,\ldots,n.\]
It is well-known (see Section 2 of \cite{CK}) that the fixed point subalgebra of $\OA$ for $\gamma$, denoted $\OA^{\gamma}$ is an AF algebra. Moreover the formula
\[ E(a)= \int_{\bt} \gamma_t(a) dt, \;\; a \in \OA\]
(with $dt$ denoting the normalised Lebesgue measure) is a faithful conditional expectation from $\OA$ onto $\OA^{\gamma}$.
Let $\tau$ be a trace on $\OA^{\gamma}$, put $\omega = \tau \circ E$ and consider a Christensen-Ivan type orthogonal filtration on $\OA^{\gamma}$ (see the previous subsection), say $\mathcal{W}=\{W_k:k\in \bn_0\}$. Define a filtration of $\OA$ in the following way: for each $k,m\in\bn_0$ let $V_{k,m}=\textup{Lin} \{S_\mu x, yS_{\nu}: |\mu| =|\nu|=m, x,y \in W_k\}$. Each subspace $V_{k,m}$ is finite-dimensional, together they span a dense $^*$-subalgebra of $\OA$ due to Lemma 2.2 in \cite{CK} (sometimes called a \emph{normal ordering} property) and if $(k,m)\neq (k',m')$, $a\in V_{k,m}, b\in V_{k',m'}$ then $\omega(a^*b)=0$. Indeed, if $m\neq m'$ then already $E(a^*b)=0$, and if $m=m'$, we note that $\omega(a^*b) = \tau(a^*b)$ and then can exploit the tracial property of $\tau$ and the orthogonality of the filtration $\mathcal{W}$ to reach the desired conclusion.

In fact the construction of the orthogonal filtration of $\OA$ proposed above contains a rather arbitrary choice of a filtration on the fixed point algebra. Given a faithful state $\sigma$ on $\OA^{\gamma}$, one can define a quantum symmetry group of a Cuntz-Krieger algebra in a different way, as a universal object for these compact quantum group actions on $\OA$ which preserve the state $\omega:=\sigma \circ E$ and the span of generating partial isometries, $\textup{Lin}\{S_1,\ldots,S_n\}$. We intend to provide the proof of the existence of such a  universal quantum symmetry group (which can be carried along the lines of that of Theorem \ref{qsymV}) and compare the two constructions in future work. Here we only observe that when the latter is applied to the Cuntz algebra $O_n$ with the canonical KMS-state $\omega =\tau \circ E$, where $\tau$ is the unique trace on the UHF-algebra $\On^{\gamma}$, the resulting quantum symmetry group is the universal quantum unitary group $U_n^+$ and its action on $\On$ is the one studied in \cite{Wang3}.

Suitable versions of this construction are in fact valid for general graph algebras (for the definition of the latter see \cite{Rae}).

\subsection{Orthogonal filtration determined by an ergodic action of a compact (quantum) group}

When a compact group $G$ acts on a unital $C^*$-algebra $\alg$, the action decomposes $\alg$ into \emph{spectral subspaces}, essentially corresponding to irreducible representations of $G$. The same fact remains valid for actions of compact quantum groups (\cite{Podles}, \cite{Boca}). 

\begin{deft}
An action $\alpha$ of a compact quantum group $\QG$ on a unital $C^*$-algebra $\alg$ is called ergodic if the fixed point subalgebra $\textup{Fix}\, \alpha=\{a \in \alg: \alpha(a) = a \ot 1_{C(\QG)}\}$ is one-dimensional.
\end{deft}

It is easy to check that if the action $\alpha$ is ergodic, then there is a unique state $\omega\in \alg^*$ which is preserved by $\alpha$ (it is given by the formula $\omega = (\id_{\alg} \ot h)\circ \alpha$, with the canonical identification $\textup{Fix}\, \alpha \approx \bc$ in mind). We say that the action is \emph{reduced} if the state $\omega$ is faithful. Note that if necessary, we can always pass to the reduced version of any given action, simply by considering a GNS representation of $\alg$ with respect to $\omega$ (see Section 2).

Given an $n$-dimensional irreducible representation $U$ of a compact quantum group  $\QG$ its character is defined by the usual formula $\chi_U:=\sum_{i=1}^n U_{ii}$. The representation $U$ has also a \emph{quantum dimension} $d_{U}\in [n,\infty)$ (\cite{wo1}). In fact both $d_U$ and $\chi_U$ depend only on the equivalence class of $U$, so we also write $\chi_{u}$ and $d_u$ for $u \in \textup{Irr} (\QG)$.  Define for each such $u$ a bounded functional $h_u$ on $C(\QG)$ by the formula
\[h_{u} (a) = d_{u} h ((\id \ot f_1)(\Com (\chi_{u}))     a), \;\; a \in C(\QG),\]
where $f_1$  denotes the \emph{Woronowicz character} of $\QG$ (see \cite{wo1}). In particular if $0$ denotes the trivial representation of $\QG$ we obtain $h_0=h$.

The following theorem can be read out from the results of \cite{Boca} (see also \cite{Podles} and \cite{Reiji}).

\begin{tw}[\cite{Boca}]\label{ergodicfiltration}
Let $\alpha:\alg \to \alg \ot C(\QG)$ be a reduced ergodic action of a compact quantum group $\QG$ on a unital $C^*$-algebra $\alg$ and let $\omega\in \alg^*$ be the unique invariant state for this action. Further let $\textup{Irr} (\QG)$ be the complete set of equivalence classes of irreducible unitary representations of $\QG$.
Define for each $u\in \textup{Irr} (\QG)$
\[P_{u} = (\id_{\alg} \ot h_{u}) \circ \alpha, \;\;\; V_u = P_u (\alg).\]
Then the collection $\{V_u: u \in \textup{Irr} (\QG), V_u\neq\{0\}\}$ is an orthogonal filtration of $\alg$ with respect to the state $\omega$. Moreover $(\alpha, \QG)\in \catV$.
\end{tw}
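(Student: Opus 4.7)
The proof is largely a matter of assembling Po\-d\-le\'s--Boca-style spectral decomposition facts and checking they match Definition \ref{orthfilt}. My plan is to recall the behaviour of the spectral projections $P_u$ associated with an ergodic coaction, then verify conditions (i)--(iii) of Definition \ref{orthfilt} together with filtration-preservation in turn.

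First, I would recall the general properties of $P_u$ and $V_u$ used throughout. The functionals $h_u$ are built precisely so that $P_u^2=P_u$, the $P_u$ for distinct $u\in\Irr(\QG)$ have mutually orthogonal ranges, and $\alpha\circ P_u=(P_u\otimes\id_{C(\QG)})\circ\alpha$ with $\alpha(V_u)\subset V_u\odot R_u$, where $R_u\subset R(\QG)$ is the $d_u^2$-dimensional space of matrix coefficients of $u$. In particular this already gives the filtration-preservation condition $(\alpha,\QG)\in\catV$ without further work. Moreover, Boca's analogue of the Hoegh-Krohn--Landstad--Stormer theorem shows that for a reduced ergodic action each spectral subspace $V_u$ is finite-dimensional (bounded by $d_u\dim u$), which together with $V_u\neq\{0\}$ being implicit in our indexing set $\Ind:=\{u\in\Irr(\QG):V_u\neq\{0\}\}$ supplies the finite-dimensionality requirement.

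Next I would check $V_0=\bc 1_\alg$: by definition $h_0=h$, so $P_0=(\id_\alg\otimes h)\circ\alpha$ is the projection onto $\tu{Fix}\,\alpha$, which is one-dimensional by ergodicity; hence $V_0=\bc 1_\alg$, and $0$ serves as the distinguished element of the indexing set. For the orthogonality condition (ii), the cleanest route is to express $\omega$ via $\omega=(\id_\alg\otimes h)\circ\alpha$ (which is the unique invariant state for the reduced ergodic action) and then, given $a=P_u(a)\in V_u$ and $b=P_v(b)\in V_v$ with $u\neq v$, compute
\[
\omega(a^*b)=(\id\otimes h)\bigl(\alpha(a)^*\alpha(b)\bigr)
=(\id\otimes h)\Bigl(\sum a_i^*\otimes x_i^*\,\sum b_j\otimes y_j\Bigr),
\]
where the $x_i$'s lie in $R_u$ and the $y_j$'s in $R_v$. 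The Schur orthogonality relations for the Haar state, $h(x_i^* y_j)=0$ for matrix coefficients of inequivalent irreducibles, then force the whole expression to vanish.

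Finally, condition (iii) is the Peter--Weyl type density statement for ergodic coactions: the algebraic direct sum $\bigoplus_{u\in\Ind}V_u$ coincides with the Podle\'s subalgebra
\[\Alg:=\{a\in\alg:\alpha(a)\in\alg\odot R(\QG)\},\]
and this is a dense unital $*$-subalgebra of $\alg$ closed under the $*$-operation (since $V_u^*=V_{\bar u}$). Density follows from a standard approximation argument: any $a\in\alg$ can be approximated via smoothing by $h_u$-type functionals, which is the content of the spectral decomposition in \cite{Boca}. I expect the main technical obstacle, if any, to be (ii): one must be careful that the formula $\omega=(\id\otimes h)\circ\alpha$ really yields a faithful state (this is where reducedness is used) and that the Schur-type orthogonality of the $h_u$ relative to $h$ is invoked in the correct form; the rest of the verification is essentially bookkeeping built on Boca's theorem.
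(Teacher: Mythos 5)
Your proposal is correct and matches the paper's treatment: the paper gives no proof of Theorem \ref{ergodicfiltration}, stating that it ``can be read out from the results of \cite{Boca}'', and your argument assembles exactly those spectral-subspace facts (idempotence and mutual orthogonality of the $P_u$, finite-dimensionality of $V_u$ via Boca's quantum multiplicity bound, $V_0=\tu{Fix}\,\alpha=\bc 1_{\alg}$ by ergodicity, Schur orthogonality of coefficients of inequivalent irreducibles under $h$ for condition (ii), density of the Podle\'s subalgebra $\bigoplus_u V_u$ with $V_u^*=V_{\bar u}$ for condition (iii), and $\alpha(V_u)\subset V_u\odot R(\QG)$ for membership in $\catV$). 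You also correctly identify that reducedness is used exactly to guarantee faithfulness of $\omega=(\id_{\alg}\otimes h)\circ\alpha$, so no gap remains.
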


In view of the above theorem it is natural to ask when, given an ergodic action $\alpha$ of a compact quantum group $\QG$ on a unital $C^*$-algebra $\alg$ and the associated orthogonal filtration $\mathcal{V}$ constructed as above, $(\alpha, \QG)$ is the final object in $\catV$.

Below we answer this question in the simplest case, when $\QG=\hat{\Gamma}$ for a discrete group $\Gamma$. Recall that the irreducible representations of $\hat{\Gamma}$, and thus also spectral subspaces of actions of $\hat{\Gamma}$ are all $1$-dimensional, indexed by elements of $\Gamma$.

\begin{tw}
Let $\Gamma$ be a discrete group and let $\QG=\hat{\Gamma}$ (so that $C(\QG)=C^*(\Gamma)$). Suppose that $\alpha$ is a reduced ergodic action of $\hat{\Gamma}$ on a unital $C^*$-algebra $\alg$. Then the quantum symmetry group $\QG_{\mathcal{V}}$ associated with the orthogonal filtration $\mathcal{V}$ described in Theorem \ref{ergodicfiltration} is isomorphic to $\widehat{\Gamma'}$, where $\Gamma'$ is the subgroup of $\Gamma$ containing precisely these elements $\gamma\in \Gamma$ for which $V_{\gamma}\neq \{0\}$.
\end{tw}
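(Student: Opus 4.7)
The approach is to identify the corestriction $\alpha'$ of $\alpha$ as a faithful filtration-preserving action of $\widehat{\Gamma'}$ on $\alg$, and to verify directly that the resulting pair $(\alpha',\widehat{\Gamma'})$ is the final object of $\catV$. The crucial simplification is that all non-zero spectral subspaces of an action of $\widehat\Gamma$ are one-dimensional, so the general construction of Theorem \ref{qsymV} collapses to an explicit computation with grouplike elements.

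First I would check that $\Gamma'$ is a subgroup and produce the corestriction $\alpha'$. For $\gamma \in \Gamma'$, fix a non-zero $v_\gamma \in V_\gamma$, so that $\alpha(v_\gamma) = v_\gamma \ot \gamma$. Ergodicity applied to $v_\gamma^* v_\gamma$ and $v_\gamma v_\gamma^*$ (both fixed by $\alpha$) forces both to be positive scalars, equal by comparing norms, so after rescaling $v_\gamma$ is unitary. Then $v_\gamma^* \in V_{\gamma^{-1}}$ gives $\gamma^{-1} \in \Gamma'$, and for $\gamma,\gamma' \in \Gamma'$ the unitary $v_\gamma v_{\gamma'} \in V_{\gamma\gamma'}$ is non-zero, so $\gamma\gamma' \in \Gamma'$. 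Since $\{v_\gamma : \gamma \in \Gamma'\}$ spans the dense $^*$-subalgebra $\Alg$ and $\alpha(v_\gamma) \in \Alg \odot \bc[\Gamma']$, the map $\alpha$ corestricts to $\alpha' : \alg \to \alg \ot C^*(\Gamma')$, a filtration-preserving action of $\widehat{\Gamma'}$. Moreover $R_{\alpha'}(\widehat{\Gamma'}) = \bc[\Gamma']$, so $\alpha'$ is faithful and thus $(\alpha', \widehat{\Gamma'}) \in \catV$.

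Next, I would verify the universal property of $(\alpha',\widehat{\Gamma'})$. Let $(\beta,\QK) \in \catV$ be arbitrary. One-dimensionality of $V_\gamma$ provides a unique $w_\gamma \in R(\QK)$ with $\beta(v_\gamma) = v_\gamma \ot w_\gamma$ for each $\gamma \in \Gamma'$. The argument of Step II of the proof of Theorem \ref{qsymV}, specialised to dimension one and combined with the identity $w_{\gamma^{-1}} = w_\gamma^*$ coming from $\beta(v_\gamma^*) = \beta(v_\gamma)^*$, shows that each $w_\gamma$ is unitary; the coaction identity applied to $v_\gamma$ yields $\Com_{\QK}(w_\gamma) = w_\gamma \ot w_\gamma$. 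Writing $v_\gamma v_{\gamma'} = \lambda(\gamma,\gamma') v_{\gamma\gamma'}$ for a scalar of modulus one and applying $\beta$ to both sides, the scalar cancels and we deduce $w_\gamma w_{\gamma'} = w_{\gamma\gamma'}$. The universal property of $C^*(\Gamma')$ then provides a unique unital $^*$-homomorphism $\pi : C^*(\Gamma') \to C_u(\QK)$ with $\pi(\gamma) = w_\gamma$; it intertwines the coproducts automatically and satisfies $(\id_{\alg} \ot \pi) \alpha' = \beta$ on $\Alg$ by construction. Uniqueness is immediate, and so $(\alpha',\widehat{\Gamma'})$ is the final object of $\catV$, giving $\QG_{\mathcal{V}} \cong \widehat{\Gamma'}$. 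The main subtle point is the possibly non-trivial $2$-cocycle $\lambda(\gamma,\gamma')$ arising from the non-canonical normalization of the $v_\gamma$; the key observation is that $\lambda(\gamma,\gamma')$ appears on both sides of the equation that produces the relations for $w_\gamma$, so it cancels and $\pi$ is well defined independently of the normalization choices.
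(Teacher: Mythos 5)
Your argument is correct, and it shares the paper's key input (spectral subspaces of an ergodic $\hat\Gamma$-action are at most one-dimensional, so a filtration-preserving action sends each $v_\gamma$ to $v_\gamma\otimes(\text{group-like unitary})$), but it concludes differently. The paper restricts attention to \emph{faithful} objects $(\alpha',\QH)\in\catV$, observes that $C(\QH)$ is then spanned by the group-like unitaries $z_\gamma$, invokes Peter--Weyl to identify $\QH$ with $\hat K$ for a discrete group $K$ that is a quotient of $\Gamma'$ via $\gamma\mapsto k_\gamma$, and finishes by the universality of $\Gamma'$ among such quotients; the fact that the corestricted action of $\widehat{\Gamma'}$ itself lies in $\catV$ is left implicit there. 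You instead verify finality directly: you check that the corestriction $\alpha'$ is a faithful object of $\catV$, and for an \emph{arbitrary} (not necessarily faithful) object $(\beta,\QK)$ you build the unique morphism $\pi:C^*(\Gamma')\to C_u(\QK)$, $\gamma\mapsto w_\gamma$, from the universal property of the full group $C^*$-algebra, with the coproduct intertwining coming from group-likeness of the $w_\gamma$ and uniqueness from the fact that the $\gamma$'s generate $C^*(\Gamma')$. This buys you an argument that never needs to classify $\QK$ (no Peter--Weyl step, no faithfulness reduction, no ``standard argument'' about quotients of $\Gamma'$), and it makes explicit two points the paper only asserts: that the $v_\gamma$ can be normalized to unitaries (your ergodicity argument for $v_\gamma^*v_\gamma$ and $v_\gamma v_\gamma^*$) and that the normalization $2$-cocycle $\lambda(\gamma,\gamma')$ cancels when deriving $w_\gamma w_{\gamma'}=w_{\gamma\gamma'}$, so $\pi$ is independent of the choices. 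The paper's route, in exchange, yields the slightly stronger structural statement that every faithful object of $\catV$ is the dual of a quotient of $\Gamma'$. One small caveat: the one-dimensionality of the $V_\gamma$, which you take as given, is justified in the paper by Tomatsu's multiplicity bound (though in the group-dual case it also follows elementarily from ergodicity, since $a^*b\in\operatorname{Fix}\alpha=\bc 1$ for $a,b\in V_\gamma$), so it is worth either citing that or including the one-line argument.
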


\begin{proof}
Each of the irreducible representations of $\hat{\Gamma}$ has both the classical and the quantum dimension equal $1$. Hence, by Theorem 3.8 of \cite{Reiji}, the corresponding spectral subspaces $V_\gamma$ of $\alg$ are at most 1-dimensional. Let $\Gamma'$ denote the set of these $\gamma\in \Gamma$ for which $V_{\gamma}\neq \{0\}$. As given $\gamma_1, \gamma_2 \in \Gamma$ and $b_{\gamma_1} \in V_{\gamma_1}$, $b_{\gamma_2} \in V_{\gamma_2}$ there is $b_{\gamma_1}^* \in V_{\gamma_1^{-1}}$ and $b_{\gamma_1} b_{\gamma_2} \in V_{\gamma_1 \gamma_2}$ (and if $\gamma\in \Gamma'$ then $b_{\gamma}$ can be chosen unitary) $\Gamma'$ is indeed a subgroup of $\Gamma$.
Let $\omega\in \alg^*$ be the unique invariant state for the action $\alpha$ and choose for each $\gamma\in \Gamma'$ an element $b_{\gamma} \in V_{\gamma}$ such that
$\omega(b_{\gamma}^* b_{\gamma})=1$. Consider now an arbitrary element $(\alpha', \QH)\in \catV$ and assume that the action $\alpha'$ is faithful (as it is in the case of the universal action). It follows from the arguments in Section 2 that for each $\gamma\in \Gamma'$
\[ \alpha'(b_{\gamma}) = b_{\gamma} \ot z_{\gamma},\]
where $z_{\gamma}\in C(\QH)$ is a one-dimensional unitary representation of $\QH$ -- in other words a group-like unitary in $C(\QH)$. By the faithfulness assumption $C(\QH)$ is spanned by all $z_{\gamma}$, $\gamma\in \Gamma'$. Peter-Weyl theory for representations of compact quantum groups implies therefore that $\QH\approx \hat{K}$ for some discrete group $K$. Moreover each $z_{\gamma}$ is of the form $\lambda_{k_{\gamma}}$ for some $k_{\gamma}\in \Gamma$. It is easy to check that the map $\gamma \to k_{\gamma}$ is a surjective group homomorphism from $\Gamma'$ to $K$. As $\Gamma'$ is the universal discrete group with the property that such a homomorphism exists, a standard argument ends the proof.
\end{proof}

It follows from the above result that if a compact quantum group of the form $\hat{\Gamma}$ has an ergodic \emph{faithful} action on a  unital $C^*$-algebra $\alg$ and $\mathcal{V}$  is the orthogonal filtration constructed as in Theorem \ref{ergodicfiltration}, $(\alpha, \hat{\Gamma})$ is the final object in $\catV$. This is no longer true for faithful ergodic actions of general compact quantum groups, as the following example shows.

\begin{example}
Let $\alpha$ denote the action of the permutation group $S_3$ on itself by the left multiplication. It is clearly ergodic and faithful, with the invariant state given by the (rescaling of) the counting measure. As $\alpha$ corresponds to the left regular representation, we know that its decomposition into spectral subspaces is determined just by the representation theory of the relevant group. Recall that $S_3$ has three irreducible representations: the trivial one, another one-dimensional representation given by the signum and a two-dimensional representation. Denote $\alg=C(S_3)\approx \bc^6$, let $\pi_1, \ldots, \pi_6$ be an enumeration of permutations in $S_3$ such that $\tu{sgn}\,\pi_i=1$ for $i=1,2,3$,  $\tu{sgn}\,\pi_i=-1$ for $i=4,5,6$, and write $e_i:=\delta_{\pi_i}\in C(S_3)$, $f=e_1+e_2+e_3 - e_4 -e_5 -e_6$. By the general properties of left regular representation, spectral subspaces of $\alg$ are then given by $V_0=\bc 1$, $V_1=\bc f$, $V_2=\alg \ominus (V_0 \oplus V_1)$ (where the orthogonal complement refers to the state of $\alg$ coming from the counting measure. Recapitulating, the category $\catV$ consists of these quantum subgroups of $S_6^+$ which preserve the one-dimensional subspace $\bc f$ (see Example \ref{QISOZ4} for a similar argument). Writing the action $\alpha$ in the form
\[ \alpha(e_j) = \sum_{i=1}^6 e_i \ot p_{ij}, \;\; j=1,\ldots,6,\]
(where $(p_{ij})_{i,j=1}^6$  is a magic unitary), the condition of preserving $\bc f$ can be written (after some computations) as follows:
\begin{align*} p_{11}+p_{21}+p_{31} &= p_{12} + p_{22} + p_{32} = p_{13} + p_{23} + p_{33} \\&= p_{44}+p_{45}+p_{46} = p_{54} + p_{55} + p_{56} = p_{64} + p_{65} + p_{66} \end{align*}
Thus the dual of the free product $\bz_2 \star \bz_2=D_{\infty}$ with a faithful action on $\alg\approx \bc^6$ given by the matrix
\[ \left(\begin{array}{cccccc} p & p^{\perp} & 0 & 0 & 0 & 0 \\ p^{\perp} & p & 0 & 0 & 0 & 0  \\ 0 &0 & 1 & 0 & 0 &0\\ 0 & 0 & 0 & q & q^{\perp} & 0
\\ 0 & 0 & 0 & q^{\perp} & q & 0 \\ 0 &0 & 0 & 0 & 0 &1
\end{array}\right),\]
where $p,q$ are free projections generating $C^*(\bz_2 \star \bz_2)$, belongs to $\catV$. Hence the final object in $\catV$ contains $\widehat{D_{\infty}}$ as a quantum subgroup, so cannot coincide with $S_3$.
\end{example}

Finally note that in general the action of a quantum symmetry group of a $C^*$-algebra $\alg$ equipped with an orthogonal filtration need not be ergodic. It is enough to consider $\alg=\bc^3$ with the canonical trace and the filtration given by $V_0=\bc 1$, $V_1=\textup{Lin} (1,-1,0)$, $V_2=\textup{Lin} (1,1,-2)$ (this filtration corresponds to the one induced by a graph with three vertices and a single edge). Then the quantum symmetry group $\QG_{\mathcal{V}}$ is a standard group $\bz/_{2\bz}$ acting by permuting the first two coordinates in $\bc^3$ and its action is clearly not ergodic. Another example can be found in Proposition 3.1 of \cite{Wang3}.

\section{Filtrations of group $C^*$-algebras and the quantum group actions preserving the word length or the block length}\label{group}

In this section we explain how the results of Section 2 are related to quantum isometry groups of the group duals. We compute the quantum isometry group of the dual of $\bz_2^{\oplus k}$ and further show how the new framework introduced in Section 2 provides in particular the language suitable for the discussion of the universal quantum groups acting on the dual of a free product of finitely generated discrete groups and preserving the block length. We also indicate in a short final subsection how the framework of quantum symmetry groups of $C^*$-algebras equipped with orthogonal filtrations allows us to extend the concepts developed in \cite{bhs} and further studied in \cite{bsk1} and \cite{bsk2} from duals of discrete groups to general compact quantum groups.

\subsection{Finitely generated groups and the usual length}

Let $\Gamma$ be a discrete group. The elements of the reduced group $C^*$-algebra will be denoted in the same way as the elements of $\Gamma$; in particular we identify the group ring $\bc[\Gamma]$ as a subalgebra of $C^*_r(\Gamma)$ via $\bc[\Gamma]=\textup{span}\{\gamma:\gamma \in \Gamma\}.$ The canonical trace on $C^*_r(\Gamma)$ is given by the continuous extension of the formula:
\[\tau(\gamma) = \left\{ \begin{array}{ccc} 1 & \textup{ if } & \gamma=e \\ 0 & \textup{ if } & \gamma\neq e \\ \end{array} \right.\]

We will consider below partitions of $\Gamma$ into finite sets, always assuming that $\{e\}$ (where $e$ denotes the neutral element of $\Gamma$)  is one of the sets in the partition. The following lemma is straightforward.


\begin{lem}\label{filtGamma}
If $\mathcal{F}=(F_i)_{i\in \Ind}$ is a partition of $\Gamma$ into finite sets and $V^{\mathcal{F}}_i:=\textup{span}\{\gamma:\gamma\in F_i\}\subset C^*_r(\Gamma)$ ($i\in \Ind$), then
the pair $( \tau, (V^{\mathcal{F}}_i)_{i\in \Ind})$ defines an orthogonal filtration of $C^*_r(\Gamma)$.
\end{lem}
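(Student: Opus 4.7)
The plan is to verify directly the three conditions of Definition \ref{orthfilt} for the pair $(\tau, (V^{\mathcal{F}}_i)_{i\in \Ind})$. Each subspace $V^{\mathcal{F}}_i$ is finite-dimensional because $F_i$ is finite by hypothesis, so the set-up of Definition \ref{orthfilt} applies. For condition (i), I would observe that since $\{e\}$ is by assumption one of the sets in the partition $\mathcal{F}$, labelling it by $0 \in \Ind$ gives $V^{\mathcal{F}}_0 = \textup{span}\{e\} = \bc\, 1_{C^*_r(\Gamma)}$.

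For condition (ii), I would check orthogonality on basis elements and extend sesquilinearly. Given $i \neq j$ and $\gamma \in F_i$, $\mu \in F_j$, the formula for the canonical trace gives
\[
\tau(\gamma^*\mu) = \tau(\gamma^{-1}\mu) = \begin{cases} 1 & \text{if } \gamma = \mu, \\ 0 & \text{otherwise.} \end{cases}
\]
Since $F_i$ and $F_j$ are disjoint members of a partition, $\gamma \neq \mu$, so $\tau(\gamma^*\mu) = 0$. By sesquilinear extension, $\tau(a^*b)=0$ for all $a \in V^{\mathcal{F}}_i$, $b \in V^{\mathcal{F}}_j$.

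For condition (iii), since $\mathcal{F}$ partitions $\Gamma$, the algebraic sum $\textup{Lin}\bigl(\bigcup_{i \in \Ind} V^{\mathcal{F}}_i\bigr)$ equals $\textup{span}\{\gamma : \gamma \in \Gamma\} = \bc[\Gamma]$. This is manifestly a $^*$-subalgebra of $C^*_r(\Gamma)$ (closed under multiplication via the group law and under $^*$ via inversion) and is dense in $C^*_r(\Gamma)$ by the very construction of the reduced group $C^*$-algebra. There is no real obstacle here, as the lemma is essentially a restatement of the definition; the only thing to keep in mind is the standing convention that $\{e\}$ is one of the partition sets, which is precisely what guarantees condition (i).
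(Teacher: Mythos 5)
Your verification is correct and is exactly the routine check the paper has in mind (the paper omits the proof as ``straightforward''): finite-dimensionality from finiteness of the $F_i$, condition (i) from the convention that $\{e\}$ is a block, orthogonality from $\tau(\gamma^{-1}\mu)=\delta_{\gamma,\mu}$ together with disjointness of the blocks, and density since the spans together give $\bc[\Gamma]$. The only point worth stating explicitly is that the canonical trace $\tau$ is faithful on $C^*_r(\Gamma)$ (a standard fact, and the reason the lemma is phrased for the reduced group $C^*$-algebra), which Definition \ref{orthfilt} requires of the state.
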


\begin{deft} \label{qsymF}
The quantum symmetry group of $(C^*_r(\Gamma), \tau, (V^{\mathcal{F}}_i)_{i\in \Ind})$, defined according to Theorem \ref{qsymV}, will be called the quantum symmetry group of $\widehat{\Gamma}$ preserving the partition $\mathcal{F}$.
\end{deft}

For a discrete group $\Gamma$ and any vector space $\mathcal{V}$ we will consider the linear maps $f_{\gamma}: \mathcal{V} \odot \bc[\Gamma]\to \mathcal{V}$ ($\gamma\in \Gamma$) defined by the linear extension of the prescription
\[ f_\gamma (\gamma' \ot v) = \delta_{\gamma,\gamma'} v, \;\;\; \gamma' \in \Gamma, v \in \mathcal{V}.\]

\begin{deft} \label{presergendef}
Let $\QG$ be a compact quantum group and assume that $\alpha:C^*_r(\Gamma) \to C^*_r(\Gamma) \ot C(\QG)$ is an action of $\QG$ on $C^*_r(\Gamma)$. Let $\mathcal{F}=(F_i)_{i\in \Ind}$ be a partition of a discrete group $\Gamma$ into finite sets. The action $\alpha$ is said to preserve  $\mathcal{F}$ if
\begin{rlist}
\item $\alpha: \bc[\Gamma] \to \bc[\Gamma] \odot C(\QG)$;
\item for all $i\neq j \in \Ind$ and  $\gamma \in F_i$ there is $f_{\gamma}|_{\alpha(F_j)} = 0$.
\end{rlist}
\end{deft}

\begin{prop} \label{equivfiltpart}
Let $\mathcal{F}=(F_i)_{i\in \Ind}$ be a partition of a discrete group $\Gamma$ into finite sets. The quantum symmetry group of $\widehat{\Gamma}$ preserving the partition $\mathcal{F}$ is the universal compact quantum group $\QG$ acting on $C^*_r(\Gamma)$ via an $\mathcal{F}$ preserving action.
Moreover if for some $i\in \Ind$ the subset $F_1=\{\gamma_1,\ldots,\gamma_k\}$ generates $\Gamma$ as a group, then the matrix $U=(U_{lm})_{l,m=1}^k\in M_k (C(\QG))$ given by
\[ \alpha(\gamma_m) = \sum_{l=1}^k \gamma_l \ot U_{lm}, \;\;\; m=1,\ldots,k,\]
is a fundamental unitary representation of $\QG$.
\end{prop}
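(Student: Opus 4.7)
The plan is to prove the proposition in two parts: (1) show that the actions of Definition \ref{presergendef} are precisely the actions belonging to $\catV$ for the filtration $(V_i^{\mathcal{F}})_{i\in\Ind}$ provided by Lemma \ref{filtGamma}, and (2) derive the statement about the fundamental representation from Theorem \ref{gentrace}(iii). Part (1), together with Definition \ref{qsymF} and Theorem \ref{qsymV}, immediately gives the first assertion. The argument is essentially a direct unpacking of the definitions, so there is no substantial obstacle; the only small check to carry out is that the elements of $F_i$ together with their group inverses generate $C^*_r(\Gamma)$ as a $C^*$-algebra rather than merely as a dense subalgebra.

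For one direction of Part (1), suppose $\alpha$ preserves the filtration $(V_i^{\mathcal{F}})$. Then $\alpha(\bc[\Gamma]) = \tu{Lin}\,\alpha\bigl(\bigcup_i V_i^{\mathcal{F}}\bigr) \subset \bc[\Gamma]\odot C(\QG)$, which is condition (i) of Definition \ref{presergendef}. For condition (ii), any $\gamma'\in F_j$ satisfies $\alpha(\gamma')\in V_j^{\mathcal{F}}\odot C(\QG) = \tu{Lin}\{\gamma'':\gamma''\in F_j\}\odot C(\QG)$, whence $f_\gamma(\alpha(\gamma'))=0$ for every $\gamma\notin F_j$, and in particular for every $\gamma\in F_i$ with $i\neq j$.

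Conversely, assume $\alpha$ satisfies (i) and (ii) of Definition \ref{presergendef}. For any $\gamma'\in F_j$, condition (i) yields a finite expansion
\[\alpha(\gamma') = \sum_{\gamma\in\Gamma} \gamma\otimes a_\gamma, \qquad a_\gamma = f_\gamma(\alpha(\gamma'))\in C(\QG).\]
Condition (ii) forces $a_\gamma=0$ for every $\gamma$ outside $F_j$, so $\alpha(\gamma')\in V_j^{\mathcal{F}}\odot C(\QG)$; linearity then gives $\alpha(V_j^{\mathcal{F}})\subset V_j^{\mathcal{F}}\odot C(\QG)$. Thus $\alpha$ belongs to $\catV$, and the two categories of actions coincide.

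For Part (2), I apply Theorem \ref{gentrace}(iii) to $V_i^{\mathcal{F}}$, where $i$ is the index with $F_i=\{\gamma_1,\dots,\gamma_k\}$. The tuple $(\gamma_1,\dots,\gamma_k)$ is an orthonormal basis of $V_i^{\mathcal{F}}$ for the scalar product induced by $\tau$, since $\tau(\gamma_l^*\gamma_m)=\tau(\gamma_l^{-1}\gamma_m)=\delta_{lm}$. Moreover, as $F_i$ generates $\Gamma$ as a group, the set $\{\gamma_1,\dots,\gamma_k,\gamma_1^{-1},\dots,\gamma_k^{-1}\}$ generates $\bc[\Gamma]$ as a $^*$-algebra, and density of $\bc[\Gamma]$ in $C^*_r(\Gamma)$ implies that $V_i^{\mathcal{F}}$ generates $C^*_r(\Gamma)$ as a $C^*$-algebra. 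Theorem \ref{gentrace}(iii) then asserts that $U=(U_{lm})_{l,m=1}^k$ is a fundamental unitary representation of $\QG$, completing the argument.
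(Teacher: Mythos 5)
Your proposal is correct and follows essentially the same route as the paper, which simply notes that the first assertion is a comparison of the conditions in Definitions \ref{defcat}/\ref{presergendef} with the filtration-preservation condition, and that the second follows from Theorem \ref{gentrace}(iii). Your write-up merely fills in the routine details (the two-way unpacking of the definitions, the orthonormality of the $\gamma_l$ with respect to $\tau$, and that $V_i^{\mathcal{F}}$ generates $C^*_r(\Gamma)$ as a $C^*$-algebra), all of which are accurate.
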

\begin{proof}
The first part of the proposition follows from the comparison of the conditions defining respective classes of quantum group actions (the one in Definition \ref{defcat} and the one in Definition \ref{presergendef}). The second is a consequence of Theorem \ref{gentrace}.
\end{proof}

\begin{tw}
Let $\Gamma$ be a finitely generated  group and let $l:\Gamma \to \bn_0$ denote the word-length function with respect to a fixed finite generating set. Put, for each $m\in \bn_0$,
$F_m=\{\gamma\in \Gamma:l(\gamma)=m\}$. Then $\mathcal{F}=(F_m)_{m=0}^{\infty}$ is a partition of $\Gamma$ into finite sets, corresponding filtration of $C^*_r(\Gamma)$ is of spectral triple type and the quantum symmetry group of $\widehat{\Gamma}$ preserving the partition $\mathcal{F}$  coincides with the quantum isometry group $\QISO(\widehat{\Gamma})$ defined in Section 2 of \cite{bhs} (and denoted there by $\QISO^+(\widehat{\Gamma})$).
\end{tw}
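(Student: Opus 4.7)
The plan is to verify the theorem's three claims in order: that $\mathcal{F}$ is a partition of $\Gamma$ into finite sets, that the resulting filtration is of spectral triple type, and that the associated quantum symmetry group coincides with $\QISO(\widehat{\Gamma})$ from \cite{bhs}. The first two steps will be quick consequences of elementary properties of the word length, while the last step amounts to matching the two definitions against each other, which is where I expect the only real care will be required.

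For the partition claim, fix a finite symmetric generating set $S$ with respect to which $l$ is defined. The ball $B_m = \{\gamma : l(\gamma) \leq m\}$ is then finite, bounded above by the number of words of length at most $m$ over the alphabet $S$, so each level set $F_m = B_m \setminus B_{m-1}$ is finite, and we obtain a disjoint decomposition $\Gamma = \bigsqcup_{m \geq 0} F_m$ with $F_0 = \{e\}$; hence $V_0^{\mathcal{F}} = \bc \cdot 1$, and Lemma \ref{filtGamma} produces the orthogonal filtration. For the spectral triple type condition from Definition \ref{Diracfilt}, given $k \in \bn$ I will take $M := k$. Subadditivity $l(\gamma \gamma') \leq l(\gamma) + l(\gamma')$ yields $F_k \cdot F_l \subset \bigcup_{j=0}^{k+l} F_j$ set-theoretically, and linearising gives $V^{\mathcal{F}}_k V^{\mathcal{F}}_l \subset \bigoplus_{j=0}^{l+M} V^{\mathcal{F}}_j$. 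The second inclusion in \eqref{inv1} is automatic, since the symmetry of $S$ forces $l(\gamma^{-1}) = l(\gamma)$, whence $(V^{\mathcal{F}}_k)^* = V^{\mathcal{F}}_k$.

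For the identification with $\QISO(\widehat{\Gamma})$, I will compare the two universal objects through their defining categories. By Proposition \ref{equivfiltpart}, the quantum symmetry group of $\widehat{\Gamma}$ preserving $\mathcal{F}$ is characterised as the universal CQG admitting an action $\alpha$ on $C^*_r(\Gamma)$ satisfying $\alpha(\bc[\Gamma]) \subset \bc[\Gamma] \odot C(\QG)$ together with $\alpha(V^{\mathcal{F}}_m) \subset V^{\mathcal{F}}_m \odot C(\QG)$ for every $m$. On the other hand, $\QISO(\widehat{\Gamma})$ is defined in \cite{bhs} as the quantum isometry group of the canonical word-length spectral triple on $C^*_r(\Gamma)$, whose Dirac operator is the closure of $D = \sum_{m \geq 0} m P_m$ acting on $\ell^2(\Gamma)$, with $P_m$ the orthogonal projection onto the closed span of $F_m$. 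This is precisely the spectral triple produced from the filtration $(V^{\mathcal{F}}_m)_{m \geq 0}$ by the construction in the proof of Theorem \ref{Diracequiv}. The main (and essentially only) obstacle is to read off from Section 2 of \cite{bhs} that their Dirac operator has exactly this form and that their defining condition for an isometric action reduces to preservation of each eigenspace of $D$; once this bookkeeping is done, Theorem \ref{Diracequiv} identifies the two categories, and the two universal objects must therefore agree.
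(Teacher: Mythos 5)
Your proposal is correct and follows essentially the same route as the paper: finiteness of the level sets and subadditivity of $l$ give the spectral triple type condition (with $V_m^*=V_m$ handling the second inclusion in \eqref{inv1}), and then the spectral triple built in the proof of Theorem \ref{Diracequiv} has Dirac operator $D\delta_\gamma=l(\gamma)\delta_\gamma$, which is exactly the one of \cite{bhs}, so Theorem \ref{Diracequiv} identifies the two categories and hence the two universal objects. The only difference is cosmetic — you make the constant $M=k$ and the partition check explicit, which the paper leaves implicit.
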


\begin{proof}
The fact that the filtration of $C^*_r(\Gamma)$ associated with $\mathcal{F}$ is of spectral triple type is a consequence of the subadditivity of the length function. Consider the construction of the spectral triple provided in the proof of Theorem \ref{Diracequiv}. The action of the resulting Dirac operator on the basis vectors in $l^2(\Gamma)$ is given by the formula $D(\delta_{\gamma}) = l(\gamma) \delta_{\gamma}$ ($\gamma \in \Gamma$). This is precisely the Dirac operator studied in \cite{bhs}. Hence Theorem \ref{Diracequiv} ends the proof.
\end{proof}

We can now return to Example \ref{QISOZ4}. Note that the orthogonal filtration of $\bc^4$ described there corresponds to the filtration of the group $C^*$-algebra $C^*(\bz_4)$ given by the sets $\{\lambda_0\}, \{\lambda_1, \lambda_3\}$ and $\{\lambda_2\}$. Hence we obtain the following result, extending Proposition 7.3 of \cite{bsk2} and Theorem 3.2 of \cite{bhs}.

\begin{prop}
The quantum isometry group $\QISO(\widehat{\bz_4})$, for $\bz_4$ equipped with the word-length function associated to the generating set $\{1,-1\}$ is isomorphic to the free wreath product $\bz_2 \wr_* \bz_2$.
\end{prop}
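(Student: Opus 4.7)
The plan is to transport the setup of Example \ref{QISOZ4} through the Pontryagin/Fourier isomorphism for $\bz_4$ and check that, under this identification, the filtration of $C^*(\bz_4)$ induced by the word length with respect to $\{1,-1\}$ coincides with the one analysed there. Once that is verified the result falls out of the uniqueness of the final object in Theorem \ref{qsymV} (applied to both pictures of the same $C^*$-algebra with orthogonal filtration).

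Concretely, first I would write down the identification. Since $\bz_4$ is finite abelian we have $C^*(\bz_4)=C^*_r(\bz_4)$, and the Fourier transform $\Phi: C^*(\bz_4)\to\bc^4$, $\Phi(\lambda_k)=\sum_{l=1}^{4} \exp(ikl\pi/2)\,e_l = f_k$ in the notation of Example \ref{QISOZ4}, is a unital $^*$-isomorphism intertwining the canonical trace $\tau$ with the normalised uniform state on $\bc^4$ used in that example. Second, I would compute the word length $l$ on $\bz_4$ with generating set $\{1,-1\}$: one gets $l(0)=0$, $l(1)=l(3)=1$, $l(2)=2$, so that the partition $\mathcal{F}$ from the previous theorem produces the filtration
\[
V^{\mathcal{F}}_0=\bc\lambda_0,\quad V^{\mathcal{F}}_1=\textup{span}\{\lambda_1,\lambda_3\},\quad V^{\mathcal{F}}_2=\bc\lambda_2
\]
of $C^*(\bz_4)$. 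Applying $\Phi$ sends this to $\{\bc f_0,\ \textup{span}\{f_1,f_3\},\ \bc f_2\}$, which is precisely the filtration $\mathcal{V}$ of $\bc^4$ analysed in Example \ref{QISOZ4}.

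Finally, $\Phi$ identifies the two $C^*$-algebras-with-orthogonal-filtration, so by functoriality it induces an isomorphism between the corresponding categories $\catV$, and hence between their final objects. On the $C^*(\bz_4)$ side this final object is by Definition \ref{qsymF} and the preceding theorem precisely $\QISO(\widehat{\bz_4})$; on the $\bc^4$ side Example \ref{QISOZ4} identifies it with the free wreath product $\bz_2\wr_*\bz_2$. The only mildly subtle point is to verify that filtration-preserving actions are transported by $\Phi\ot\id$ between the two sides, but this is automatic since $\Phi$ is a state-preserving $^*$-isomorphism and the defining condition of $\catV$ is purely algebraic; so there is really no hard step, merely bookkeeping.
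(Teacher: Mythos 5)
Your proposal is correct and follows essentially the same route as the paper: the paper's own (very brief) argument is precisely to observe that the Fourier transform identifies the word-length filtration $\{\bc\lambda_0,\ \textup{span}\{\lambda_1,\lambda_3\},\ \bc\lambda_2\}$ of $C^*(\bz_4)$ with the filtration of $\bc^4$ treated in Example \ref{QISOZ4}, and then to read off the answer $\bz_2\wr_*\bz_2$ from that example together with the identification of the word-length quantum symmetry group with $\QISO(\widehat{\bz_4})$. Your extra bookkeeping (trace preservation, transport of the category $\catV$ and of its final object under the $^*$-isomorphism) just makes explicit what the paper leaves implicit.
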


The ideas from Example \ref{QISOZ4} lead to a possible technique of computing of quantum isometry groups for duals of finite abelian groups. Below we show an application of it for $\Gamma=\bz_2^{\oplus k}$ ($k\in \bn$). Recall that the quantum group $O_k^{-1}$ introduced in  \cite{bbc} was shown in that paper to be the quantum symmetry group of the $k$-dimensional cube.

\begin{tw}
Let $k\in \bn$. The quantum isometry group $\QISO(\widehat{\bz_2^{\oplus k}})$ is equal to $O^{-1}_k$.
\end{tw}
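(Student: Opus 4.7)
The plan is to reduce the problem, via Pontryagin duality, to the already-known computation of the quantum symmetry group of the $k$-dimensional hypercube. First I would invoke the Fourier isomorphism $\mathcal{F}\colon C^*(\bz_2^{\oplus k}) \to C(\bz_2^k) = \bc^{2^k}$ sending the canonical generator $g_j$ to the character $\chi_{e_j}\colon v \mapsto (-1)^{v_j}$; on a basis element $g_{j_1}\cdots g_{j_m}$ with $j_1 < \cdots < j_m$ this gives the character $\chi_v$ where $v = e_{j_1} + \cdots + e_{j_m}$ has Hamming weight $m$. Under $\mathcal{F}$ the canonical trace $\tau$ corresponds to the normalized counting measure on $\bz_2^k$, and the length-$m$ subspace $V_m$ is identified with the Hamming-weight-$m$ subspace $\textup{span}\{\chi_v : \textup{wt}(v) = m\}$ of $C(\bz_2^k)$.

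Next, any compact quantum group action on the commutative finite-dimensional algebra $\bc^{2^k}$ is given, \`a la Wang, by a magic unitary $P = (p_{vw})_{v,w \in \bz_2^k}$ via $\alpha(\delta_w) = \sum_v \delta_v \otimes p_{vw}$, and every such action automatically preserves the counting measure. Therefore the category $\catV$ translates, via $\mathcal{F}$, into the category of compact quantum groups equipped with a magic unitary $P$ that commutes with the orthogonal projection onto each weight subspace $V_m$. The crucial observation is that these projections are exactly the spectral projections of the adjacency matrix $A$ of the $k$-cube (the graph with vertex set $\bz_2^k$ and edges between vertices at Hamming distance $1$): a direct computation yields $A\chi_v = (k - 2\,\textup{wt}(v))\,\chi_v$, so $A$ has the subspaces $V_m$ as eigenspaces with pairwise distinct eigenvalues $k - 2m$ for $m = 0, \ldots, k$. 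Consequently $P$ commutes with $A$ if and only if it commutes with each of the weight projections.

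Putting this together, $\catV$ is equivalent via Fourier to the category of compact quantum group actions on the $k$-cube in the graph-theoretic sense, so its final object coincides with the quantum symmetry group of the $k$-cube; by \cite{bbc} this is precisely $O_k^{-1}$. The only real bookkeeping is checking that this Fourier-dual equivalence of categories extends to morphisms, but this is immediate from the universal properties on both sides and from the fact that $\mathcal{F}$ is a $\tau$-preserving $^*$-isomorphism.
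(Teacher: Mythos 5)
Your proof is correct, and the overall strategy --- Fourier-transform $C^*(\bz_2^{\oplus k})$ onto $C(\bz_2^k)$, identify the word-length filtration with the Hamming-weight decomposition, and reduce to the quantum symmetry group of the $k$-cube computed in \cite{bbc} --- is exactly the one the paper follows. Where you genuinely differ is in the key middle step. The paper writes out the entries of the Fourier-transformed projections $Q_m$ explicitly as character sums, observes that their level sets (the ``colour components'') encode precisely the Hamming length $l(\mathbf{s}-\mathbf{t})$, and then invokes Proposition 2.4 of \cite{banmet} to trade commutation with the $Q_m$ for commutation with the distance matrices $R_m$ of the Cayley graph. You instead note that the $Q_m$ are exactly the spectral projections of the cube's adjacency matrix $A$, since $A\chi_v=(k-2\,\mathrm{wt}(v))\chi_v$ and the $k+1$ eigenvalues $k-2m$ are pairwise distinct, so a magic unitary commutes with all of them if and only if it commutes with $A$ --- pure linear algebra, with no entry computations and no appeal to \cite{banmet}. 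For $\bz_2^{\oplus k}$ this is a clean simplification. What the paper's colour-component method buys is robustness: it is the version that extends to $\bz_r^{\oplus k}$ for $r=3,4$ (as remarked after the proof), where the Cayley-graph eigenvalues are sums of roots of unity and it is no longer immediate that they separate the length classes.
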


\begin{proof}
We begin by describing a general approach: let $\Gamma$ be a finite abelian group equipped with a generating set $S$ and the associated word length function $l$. Let $M=\tu{card}\, \Gamma$. Denote the elements of the dual group $\hat{\Gamma}$ by $\chi, \chi'$, etc.\ and view them as characters on $\Gamma$. Let $F:l^2(\Gamma) \to l^2(\hat{\Gamma})$ denote the Fourier transform, so that $F(\delta_{\gamma}) = \frac{1}{M} \sum_{\chi \in \hat{\Gamma}} \chi(\gamma) \delta_{\chi}$.
When we want to compute the quantum isometry group $\QISO(\hat{\Gamma})$, we ask about a universal unitary matrix $V=(V_{\gamma, \gamma'})_{\gamma,\gamma'\in \Gamma}$  with entries in some $C^*$-algebra $\blg$ such that the
\[ \alpha(\lambda_{\gamma'})=\sum_{\gamma \in \Gamma} \lambda_{\gamma} \ot V_{\gamma, \gamma'}, \;\;\; \gamma \in \Gamma\]
yields a unital $^*$-homomorphic map $\alpha:C^*(\Gamma) \to C^*(\Gamma) \ot \blg $ which additionally preserves the subspaces $H_m=\tu{Lin} \{\lambda_{\gamma}:l(\gamma)=m\}$. The latter condition is equivalent to the fact that the matrix $V$ commutes with the projections $P_m$ ($m=1,2,\ldots$), where $P_m\in B(l^2(\Gamma))$ is given by
\[ (P_m)_{\gamma, \gamma'} = \left\{\begin{array}{ccc}
1&\, \tu{if}\; \gamma=\gamma', l(\gamma)=m \\ 0 &\tu{otherwise}
\end{array}\right.\]
Now using the Fourier transform we see that this is the same as asking for a magic unitary $(V_{\chi,\chi'})_{\chi, \chi'\in \hat{\Gamma}}$ which commutes with projections $Q_m$, where $Q_m=F^{-1} P_m F$. It is easy to check that
\[ (Q_m)_{\chi, \chi'} = \sum_{\gamma\in \Gamma, l(\gamma)=m}  \overline{\chi}\chi'(\gamma), \;\;\; \chi, \chi' \in \hat{\Gamma}.\]
Due to Proposition 2.4 in \cite{banmet} instead of working with these commutation relations it suffices to understand the commutation with individual `colour components' of the matrices $Q_m$.

Let us now specialise to the case $\Gamma=(\bz^2)^{\oplus k}$, with the generating set $(1,0,\ldots), (0,1,\ldots)$, etc.. Identify $\hat{\Gamma}$ with $\Gamma$, denoting the characters on $\Gamma$ by sequences ${\bf s}\in \{0,1\}^k$ acting on $\mathbf{r}\in  \{0,1\}^k$ by
\[ {\bf s} ({\bf r}) = \prod_{i=1}^k (-1)^{s_i r_i}\]
Note that the length on $\Gamma$ takes in this case a simple form: for $s\in \Gamma$ we have $l({\bf s}) = \tu{card}\,\{i:s_i=1\}$. The matrices $Q_m$ introduced above take in this case the form:
\[   (Q_m)_{{\bf s}, {\bf t}} = \sum_{{\bf r}\in \Gamma:\,l({\bf r})=m}\; \prod_{i=1}^k(-1)^{(s_i-t_i)r_i}, \]
so that for example
\[ (Q_1)_{\bf s, \bf t} = \sum_{i=1}^k (-1)^{s_i-t_i}= k - 2 l({\bf s} - {\bf t}),\]
\[(Q_2)_{{\bf s}, {\bf t}} = \sum_{i,i'=1, \,i\neq i'}^k (-1)^{(s_i -t_i) + (s_i-t_i')} = \binom{k-l({\bf s} - {\bf t})}{2} + \binom{l({\bf s - \bf t})}{2} - (k-l({\bf s} -{\bf t}))l({\bf s} -  { \bf t}) ,\]
etc.. It is thus easy to see that `colour components' of the matrices $Q_m$ carry precisely the information about the length of the element $\bf s - \bf t$; hence, due to Proposition 2.4 of \cite{banmet}, magic unitaries commute with matrices $Q_m$ if and only they commute with the matrices $R_m$ given by the formula
\[ (R_m)_{{\bf s}, {\bf t}} = \left\{\begin{array}{ccc}
1&\, \tu{if}\;\; l({\bf s - \bf t})=m \\ 0 &\tu{otherwise}
\end{array}\right.\]
The latter condition is equivalent to the condition imposed on magic unitaries acting on the Cayley graph of $\hat{\Gamma}$, i.e.\ the $k$-dimensional cube. As in \cite{bbc} it was shown that the quantum symmetry group of the cube is $O_k^{-1}$, this ends the proof.
\end{proof}

A calculation similar to that in the proof above shows that in fact if $\Gamma=\bz_{r}^{\oplus k}$ with $r\in\{2,3,4\}$ is equipped with a standard generating set, then the quantum isometry group of $\hat{\Gamma}$ coincides with the quantum symmetry group of the Cayley graph of $\Gamma$. We believe that the same is true for arbitrary $r$.

\subsection{Free product groups, the block length and the shape}

Suppose now that $n\in \bn$, $\Gamma_1,\ldots, \Gamma_n$ are finitely generated groups and $\Gamma=\Gamma_1 \star \ldots \star \Gamma_n$. The choice of a finite generating set $S_i$ in each $\Gamma_i$ determines a natural finite set $S$ of generators of $\Gamma$, $S=\bigcup_{i=1}^n S_i$. In particular we have a natural word-length function on $\Gamma$. In this situation however, $\Gamma$ admits another natural length function, a so called \emph{block length} $b:\Gamma \to \bn_0$. The block length is defined in a following way: for any element $\gamma\in \Gamma$ we write it as a reduced word in elements in each of the groups $\Gamma_i$, and declare the length of this word to be the block length of $\gamma$; thus
\[ b(\gamma)=k\]
if
\[ \gamma= \gamma_{i_1}\cdots \gamma_{i_k}, \;\;\; i_j\in \{1,\ldots,n\}, \;\; i_j\neq i_{j+1},\;\; \gamma_{i_j} \in \Gamma_{i_j}\setminus\{e\}.\]
The idea is that each element  $\gamma_{i_j} \in \Gamma_{i_j}$ in the decomposition above corresponds to a block in $\gamma$. So for example if we are dealing with a free product of $n$-copies of $\bz$ with the corresponding generating set $S=\{\gamma_1,\ldots,\gamma_n\}$ then
\[ b(\gamma_1^{k_1} \gamma_2^{k_2} \gamma_1^{k_3})= 3,  \textup{  if only  } k_1,k_2,k_3 \neq 0.\]
For $\Gamma$ as above consider the filtration of $\Gamma$ given by the sets $F_{l,m}=\{\gamma\in \Gamma:l(\gamma)=l, b(\gamma)=m\}$ ($l,m\in \bn_0, l\leq m$). It is clear that each $F_{l,m}$ is finite and closed under taking inverses. Write $\mathcal{F}_{b}:=\{F_{l,m}: l, m\in \bn_0, l\leq m\}$.
If the action of a compact quantum group on $C^*_r(\Gamma)$  preserves the filtration $\mathcal{F}_{b}$ (in the sense of Definition \ref{presergendef}) we simply say that it \emph{preserves the word length and the block length}.

\begin{tw}
Assume that $n\in \bn$, $\Gamma_1,\ldots, \Gamma_n$ are finitely generated groups with finite generating sets denoted respectively $S_1,\ldots,S_n$ and let $\Gamma=\Gamma_1 \star \ldots \star \Gamma_n$. Then there exists a universal
compact quantum group acting on $C^*_r(\Gamma)$ in a block length and word length preserving way. We denote it $\QISO_b(\widehat{\Gamma})$; it is a quantum subgroup of $\QISO(\widehat{\Gamma})$.
\end{tw}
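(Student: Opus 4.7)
The plan is to apply Theorem \ref{qsymV} to a suitable orthogonal filtration of $C^*_r(\Gamma)$ induced by the partition $\mathcal{F}_b$, and then deduce the subgroup relation from Corollary \ref{generatedfiltrations}. I would begin by observing that $\mathcal{F}_b = \{F_{l,m} : 0 \le l \le m\}$ is a partition of $\Gamma$ into finite sets containing $\{e\} = F_{0,0}$. Every $\gamma \in \Gamma$ has a uniquely defined pair $(l(\gamma), b(\gamma))$ with $b(\gamma) \le l(\gamma)$, since each block in the reduced decomposition of $\gamma$ contributes at least one generator to its length; so the sets $F_{l,m}$ are pairwise disjoint and cover $\Gamma$. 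Finiteness of $F_{l,m}$ follows at once from the inclusion $F_{l,m} \subset F_l$, where $F_l$ is the (finite, because $S$ is finite) set of elements of word length $l$.

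Next, Lemma \ref{filtGamma} applied to the partition $\mathcal{F}_b$ produces an orthogonal filtration $(V^{\mathcal{F}_b}_{l,m})_{0 \le l \le m}$ of $C^*_r(\Gamma)$ with respect to the canonical trace $\tau$. Theorem \ref{qsymV} then yields a universal compact quantum group $\QG$ acting on $C^*_r(\Gamma)$ in a way that preserves this filtration. By Proposition \ref{equivfiltpart}, this universality translates into universality among compact quantum group actions on $C^*_r(\Gamma)$ preserving $\mathcal{F}_b$ in the sense of Definition \ref{presergendef}, which by construction of $\mathcal{F}_b$ is exactly the condition of preserving both the word length and the block length. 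We set $\QISOb(\widehat{\Gamma}) := \QG$.

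For the final assertion, I would verify that $\mathcal{V}^{\mathcal{F}_b}$ is a subfiltration of the word-length filtration $\mathcal{V}^{\mathcal{F}}$ in the sense introduced before Corollary \ref{generatedfiltrations}: for each pair $(l,m)$ with $l \le m$ the inclusion $F_{l,m} \subset F_l$ gives $V^{\mathcal{F}_b}_{l,m} \subset V^{\mathcal{F}}_l$, and both filtrations are orthogonal with respect to the same trace $\tau$. Corollary \ref{generatedfiltrations} then immediately yields that $\QISOb(\widehat{\Gamma})$ is a quantum subgroup of $\QG_{\mathcal{V}^{\mathcal{F}}} = \QISO(\widehat{\Gamma})$.

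I do not expect any real obstacle here; the statement is essentially an application of the general machinery of Section 2 to the concrete partition $\mathcal{F}_b$, and the only things to be checked, namely that the sets $F_{l,m}$ are finite, that the resulting filtration is orthogonal, and that block-length preservation coincides with preservation of $\mathcal{F}_b$, are straightforward consequences of the definitions. If any step deserves attention it is the translation between the categorical formulation of Definition \ref{defcat} and the combinatorial notion of Definition \ref{presergendef}, but this has already been handled in Proposition \ref{equivfiltpart}.
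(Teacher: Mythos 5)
Your argument is correct and follows exactly the paper's route: the paper's proof is the one-line observation that the statement follows from Theorem \ref{qsymV}, Proposition \ref{equivfiltpart} and Corollary \ref{generatedfiltrations}, which is precisely the chain you spell out (with the routine verifications that $\mathcal{F}_b$ is a partition into finite sets and that it refines the word-length partition filled in). No gaps; nothing further is needed.
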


\begin{proof}
This follows from Theorem \ref{qsymV}, Proposition \ref{equivfiltpart} and Corollary \ref{generatedfiltrations}.
\end{proof}

Although the filtration is double-indexed, we can also use the lexicographic order to order it into a sequence, and as we have $l(\gamma\gamma')\leq l(\gamma)+l(\gamma')$, $b(\gamma\gamma')\leq b(\gamma)+b(\gamma')$ for all $\gamma, \gamma'$, we see that such ordering leads to an orthogonal filtration of $C^*_r(\Gamma)$ of spectral triple type.

Note that in the situation described above, we could also consider directly the partition of $\Gamma$ given only by the block length. In general however this fails to be a partition of $\Gamma$ into finite sets. This problem disappears if each of the $\Gamma_i$ is finite.

\begin{deft}
Let $n\in \bn$, $\Gamma_1,\ldots, \Gamma_n$ be finite groups and let $\Gamma=\Gamma_1 \star \ldots \star \Gamma_n$. An action of a compact quantum group $\QG$ on $C^*_r(\widehat{\Gamma})$ is said to preserve the block length if it preserves the filtration $\mathcal{F}_b=\{F_m:m\in \bn_0\}$, where $F_m=\{\gamma\in \Gamma:b(\gamma)=m\}$ ($m \in \bn_0$).
\end{deft}

\begin{tw} \label{QISObExistence}
Assume that $n\in \bn$, $\Gamma_1,\ldots, \Gamma_n$ are finite groups and let $\Gamma=\Gamma_1 \star \ldots \star \Gamma_n$. Then there exists a universal
compact quantum group acting on $C^*_r(\Gamma)$ in a block length preserving way. We denote it $\QIb(\widehat{\Gamma})$; it contains $\QISO_b(\widehat{\Gamma})$ as a quantum subgroup.
\end{tw}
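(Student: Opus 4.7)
The proof is essentially a direct application of the machinery developed in Section 2, so the plan is to verify the hypotheses of the relevant prior results and invoke them.

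My first step is to check that $\mathcal{F}_b=(F_m)_{m=0}^\infty$ is genuinely a partition of $\Gamma$ into \emph{finite} sets. This is where the finiteness hypothesis on each $\Gamma_i$ is used: every element of block length $m$ can be written uniquely as a reduced word $\gamma_{i_1}\cdots\gamma_{i_m}$ with alternating indices and each $\gamma_{i_j}\in\Gamma_{i_j}\setminus\{e\}$; since there are at most $n(\max_i|\Gamma_i|-1)$ choices at each position and only $m$ positions, $F_m$ is finite. Clearly $F_0=\{e\}$ and the $F_m$ partition $\Gamma$.

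Next I invoke Lemma \ref{filtGamma} to conclude that $(\tau,(V_m^{\mathcal{F}_b})_{m\in\bn_0})$ is an orthogonal filtration of $C^*_r(\Gamma)$, where $V_m^{\mathcal{F}_b}=\tu{span}\{\gamma:b(\gamma)=m\}$. Theorem \ref{qsymV} then produces a universal final object in the category $\mathcal{C}_{C^*_r(\Gamma),\mathcal{V}^{\mathcal{F}_b}}$; by Proposition \ref{equivfiltpart} this final object is precisely the universal compact quantum group acting on $C^*_r(\Gamma)$ in a way that preserves the partition $\mathcal{F}_b$, i.e.\ in a block-length preserving way. Define $\QIb(\widehat{\Gamma})$ to be this quantum group.

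For the second assertion, I compare the two filtrations $\mathcal{F}_b$ and $\mathcal{F}_{(l,b)}=\{F_{l,m}:l,m\in\bn_0,\,l\leq m\}$. Since any element of block length $m$ and word length $l$ has in particular block length $m$, we have $F_{l,m}\subset F_m$ for all admissible $(l,m)$; equivalently $V^{\mathcal{F}_{(l,b)}}_{(l,m)}\subset V^{\mathcal{F}_b}_m$. Hence the word/block-length filtration used to define $\QISO_b(\widehat{\Gamma})$ is a subfiltration of $\mathcal{F}_b$ in the sense introduced before Corollary \ref{generatedfiltrations}. That corollary then immediately gives that $\QISO_b(\widehat{\Gamma})$ is a quantum subgroup of $\QIb(\widehat{\Gamma})$.

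There is no real obstacle here, as all the heavy lifting is absorbed into Theorem \ref{qsymV} and Corollary \ref{generatedfiltrations}; the only point requiring attention is the finiteness of the $F_m$, which genuinely fails when the $\Gamma_i$ are allowed to be infinite (for instance a single nontrivial element of $\bz$ already produces infinitely many elements of block length $1$), explaining why the word-length refinement is needed in the general statement preceding this theorem.
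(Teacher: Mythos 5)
Your proposal is correct and follows essentially the same route as the paper, which deduces the statement directly from Theorem \ref{qsymV}, Proposition \ref{equivfiltpart} and Corollary \ref{generatedfiltrations}; your explicit verification that the sets $F_m$ are finite (via Lemma \ref{filtGamma}) and that the word-and-block-length partition is a subfiltration of the block-length partition just spells out the details the paper leaves implicit.
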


\begin{proof}
Once again the result is a corollary of Theorem \ref{qsymV}, Proposition \ref{equivfiltpart} and Corollary \ref{generatedfiltrations}.
\end{proof}

Observe also that in the context described above the block length can be viewed as the usual word length with respect to the generating set $\bigcup_{i=1}^n \Gamma_i$ (which is of course in general non-minimal).

If $n=1$, then the condition of the block length preservation is trivial (each element of $\Gamma=\Gamma_1$ different from a neutral element has block length $1$) and the quantum symmetry group $\QIb(\widehat{\Gamma})$ is simply the quantum automorphism group in the sense of \cite{wa2} of the finite-dimensional algebra $C^*_r(\Gamma)$ equipped with its group-algebraic trace.


We saw above that for elements of $\Gamma=\Gamma_1 \star \ldots \star \Gamma_n$ (where $\Gamma_i$ are finitely generated groups, not necessarily finite) we can consider the partition of $\Gamma$ into sets of words of a given block length. This partition can be further subdivided, if apart from the number of blocks appearing in a word $\gamma\in \Gamma$ we also consider their position. Fix a finite generating set $S_i$ in each $\Gamma_i$ ($i=1,\ldots,n$). If $\gamma\in \Gamma$, $b(\gamma)=k$, ($k\neq 0$) we view $\gamma$ as a word in generators in $\bigcup_{i=1}^n S_i$ and say  that the \emph{shape} of $\gamma$, to be denoted by $s(\gamma)$, is an increasing sequence of length $k$ describing the positions where the blocks of $\gamma$ begin. Thus if $b(\gamma)=1$, we have $s(\gamma)=(l(\gamma))$; if we are once again dealing with a free product of $n$-copies of $\bz$ with the corresponding generating set $S=\{\gamma_1,\ldots,\gamma_n\}$, then for example
\[ s(\gamma_1^{k_1} \gamma_2^{k_2} \gamma_1^{k_3})= (k_1,k_1+k_2, k_1 + k_2 + k_3),  \textup{  if only  } k_1,k_2,k_3 \neq 0.\]
Note that if we in addition declare $s(e)=\emptyset$ then the shape can be viewed as a function from $\Gamma$ into the set $\Seq$, consisting of all (possibly empty) increasing finite sequences of integers. Note that this function in particular carries the information about the word length and the block length of the group elements.

Thus we can consider the family $\{F_{\mathbf{s}}:\mathbf{s}\in \Seq\}$ given by
\[ F_{\mathbf{s}}=\{\gamma \in \Gamma: s(\gamma) =\mathbf{s}\}, \;\;\; \mathbf{s}\in \Seq,\]
as a symmetric partition of $\Gamma$ into finite sets (identifying $\emptyset\in \Seq$ as the distinguished element $0$ of the index set).
Thus we can once again define what it means for an action of a compact quantum group on $C^*_r(\Gamma)$ to preserve the shape and deduce the existence of the corresponding quantum symmetry group. We leave the details to the reader. Note that  the set $\Seq$ is countable, and can be ordered  in such a way that for each $\gamma,\gamma' \in \Gamma$ if $s(\gamma)\leq s(\gamma')$, then $b(\gamma)\leq b(\gamma')$ and $l(\gamma)\leq l(\gamma')$. Hence it  can be checked that the resulting orthogonal filtration of $C^*_r(\Gamma)$ is in fact of spectral triple type.

\subsection{The universal quantum symmetry group acting on a compact quantum group $\QG$ and preserving a partition of $\Irr(\QG)$}

The new concept of a quantum symmetry group acting on a $C^*$-algebra equipped with an orthogonal filtration introduced in Section 2 allows us to extend the definition of a quantum isometry group acting on $\hat{\Gamma}$, a dual of a discrete group, in a length preserving way, to the case where $\hat{\Gamma}$ is replaced by any compact quantum group.

As before, the key observation is provided by a general lemma. Recall that for a compact quantum group $\QG$,
$\textup{Irr}(\QG)$ denote the set of all equivalence classes of irreducible unitary representations of $\QG$. We will denote the trivial representation by $0$. The following result generalises Lemma \ref{filtGamma}.

\begin{lem}
Let $\QG$ be a compact quantum group with the Haar state $h\in C(\QG)^*$ and let the family $\mathcal{F} = \{F_i:i\in \Ind\}$ be a partition of $\Irr(\QG)$ into finite sets, with $F_0=\{0\}$. Define for each $i\in \Ind$ the set $V^{\mathcal{F}}_i$ to be the span of the coefficients of all irreducible unitary representations of $\QG$ belonging to one of the equivalence classes in $F_i$.  Then the pair $(h, (V^{\mathcal{F}}_i)_{i\in \Ind})$ defines an orthogonal filtration of $C_r(\QG)$
\end{lem}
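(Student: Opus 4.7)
The plan is to verify, in turn, the three axioms in Definition \ref{orthfilt} for the candidate pair $(h,(V^{\mathcal{F}}_i)_{i\in\Ind})$ inside $C_r(\QG)$. The first thing to check is that $h$ really does make sense as a faithful state on $C_r(\QG)$: by the construction of the reduced $C^*$-algebra, $C_r(\QG)$ is the image of $C(\QG)$ under the GNS representation with respect to $h$, so the descended state is automatically faithful. Thus the ambient data match the hypotheses of Definition \ref{orthfilt}.

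Axiom (i) is immediate, since $F_0=\{0\}$ contains only the trivial representation, whose unique matrix coefficient is $1_{C(\QG)}$; hence $V^{\mathcal{F}}_0=\bc\,1_{C_r(\QG)}$. For axiom (iii) I would observe that $\textup{Lin}\bigl(\bigcup_{i\in\Ind}V^{\mathcal{F}}_i\bigr)$ is literally the linear span of coefficients of all irreducible unitary representations of $\QG$, which is the canonical dense Hopf $^*$-subalgebra $R(\QG)$ recalled in the preliminaries. Density in $C_r(\QG)$ is automatic from this, and the fact that $R(\QG)$ is a unital $^*$-subalgebra follows from standard representation-theoretic facts: the product of two matrix coefficients is a coefficient of the tensor product representation (which decomposes into irreducibles), and the adjoint of a coefficient of $U$ is a coefficient of the conjugate representation $\bar U$.

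The substantive step is axiom (ii). It reduces to the Woronowicz analogue of the Peter--Weyl orthogonality relations: if $U$ and $V$ are inequivalent irreducible unitary representations of $\QG$, then $h(U_{pq}^*V_{rs})=0$ for all matrix indices $p,q,r,s$. Because $\mathcal{F}$ is a partition, whenever $i\neq j$ every irreducible contributing to $V^{\mathcal{F}}_i$ lies in a different equivalence class from every irreducible contributing to $V^{\mathcal{F}}_j$. Writing $a\in V^{\mathcal{F}}_i$ and $b\in V^{\mathcal{F}}_j$ as linear combinations of such coefficients and expanding $h(a^*b)$ by bilinearity then yields zero.

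The main obstacle, such as it is, is purely bookkeeping: one must remember that the full Peter--Weyl relations for a compact quantum group give nonvanishing inner products between coefficients of a \emph{single} irreducible class, twisted by the Woronowicz character $f_1$, whereas across \emph{distinct} classes the inner products vanish outright. The lemma is formulated precisely so that only the second situation is ever invoked, which is why no trace or Kac-type hypothesis on $\QG$ is needed.
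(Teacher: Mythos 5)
Your proof is correct and follows essentially the same route as the paper, which simply cites the Peter--Weyl orthogonality relations of Woronowicz; you have merely spelled out the routine verifications of axioms (i) and (iii) (and the faithfulness of $h$ on $C_r(\QG)$) that the paper leaves implicit.
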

\begin{proof}
This is a consequence of the Peter-Weyl orthogonality relations from \cite{wo1}.
\end{proof}

Hence as for the duals of discrete groups, given a partition of $\Irr(\QG)$ we can define the universal quantum symmetry group of $\QG$ (viewed as a quantum space, not a quantum group!) for the actions preserving the associated partition. In particular, using the concept of length on a discrete quantum group introduced and developed in \cite{Vergnioux}, we can define and study the quantum symmetry group of a given compact quantum group $\QG$ with respect to a fixed length on $\hat{\QG}$.

\section{The quantum isometry group $K^+_n$ and its interpretation as a quantum symmetry group}

In this section we describe a compact quantum group $K_n^+$,  first discovered in \cite{bsk2} via its category of representations (given by a certain class of non-crossing partitions) and show it is the quantum symmetry group $\QIb(\widehat{\bF_n})$.

Whenever we consider matrices of the size $2n$ we will denote the relevant indices by pairs $i\kappa$,
where $i\in\{-1,1\}$ and $\kappa\in\{1,\ldots,n\}$.
Moreover we will use the notation $\overline{i}=i^{-1}$, so that $\overline{\overline{i}}=i$. This
facilitates the description of the fact that coordinates  come naturally in pairs. To simplify the notation we
will also write $\Jnd_n=\{i\kappa:i\in \{-1,1\},\alpha\in\{1, \ldots, n\}\}$.  The canonical basis in $\bc^{2n}$ will be denoted by $(e_z)_{z \in \Jnd_{n}}$.

Recall that if $U=(U_{ij})_{i,j=1}^k \in M_k(\blg)\approx B(\bc^k)\ot \blg$ is a unitary matrix with entries in some $C^*$-algebra $\blg$, the intertwiner space
$\tu{Hom} (U^{\ot l}; U^{\ot m})$ is defined as a collection of all operators $T\in B((\bc^k)^{\ot l};(\bc^k)^{\ot m})$ such that
\[  (T\ot \id_{\blg})(U^{\ot l}) = U^{\ot m}(T\ot \id_{\blg}),\]
where in the last formula above the notation $U^{\ot k}$ refers to the usual $k$-fold product of representations of compact quantum groups, see for example \cite{wo1}, so that $U^{\ot k} \in B((\bc^k)^{\ot l})\ot \blg$.

We also need to recall the following definition introduced in \cite{bsk1}.

\begin{deft}\label{definitHn}
Let $n \in \bn$. The compact quantum group $H^+(n,0)$ is the compact quantum group whose fundamental unitary representation $(U_{i\kappa,j\beta})_{i\kappa, j\beta \in \Jnd_n}$ is the universal unitary matrix such that for each $i\kappa, j\beta \in \Jnd_n$
\begin{rlist}
\item $U_{i\kappa,j\beta}$ is a partial isometry;
\item $U_{i\kappa,j\beta} = U_{\bar{i}\kappa,\bar{j}\beta}^*$.
\end{rlist}
\end{deft}

Recall also that in \cite{bsk1} we showed that $H^+(n,0)=\QISO(\freedual)$. The key result for this section is given by the following lemma.

\begin{lem} \label{mainlemma}
Let $\QG$ be a compact quantum group which has the fundamental unitary representation $U=(U_{x,z})_{x,z\in \Jnd_n}$ satisfying the defining conditions of the fundamental representation of $H^+(n,0)=\QISO(\freedual)$ (so that $\QG$ is a quantum subgroup of $H^+(n,0)$). Consider the action of $\QG$ on $C^*(\bF_n)$ given by the usual formula:
\begin{equation} \alpha (\gamma_{\kappa}) = \sum_{i\sigma\in \Jnd_n} \gamma_{\sigma}^i \ot  U_{i\sigma, 1\kappa},\label{Fnaction}\end{equation}
where $\gamma_1,\ldots,\gamma_n$ denote the canonical generators of $\bF_n$.
Then the following conditions are equivalent:
\begin{rlist}
\item the operator $T:\bc^{2n} \otimes \bc^{2n} \to \bc^{2n} \ot \bc^{2n}$ given by
\[ T (e_z \ot e_x) = \delta_{z\bar{x}} e_{\bar{z}} \otimes e_z, \;\;\; x,z \in \Jnd_n\]
is an intertwiner in $\textup{Hom} (U^{\ot 2}; U^{\ot 2})$;
\item each $U_{x,z}$ is a normal operator (so in particular a `partial unitary');
\item the action $\alpha$ preserves the block length of elements of $\freedual$ which have both the block and the word length equal $2$;
\item the action $\alpha$ preserves the block length of elements of $\freedual$;
\item the action $\alpha$ preserves the shape of elements of $\freedual$.
\end{rlist}
\end{lem}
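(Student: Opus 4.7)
I would establish the circular chain $(i)\Leftrightarrow(ii)\Rightarrow(v)\Rightarrow(iv)\Rightarrow(iii)\Rightarrow(ii)$. The implications $(v)\Rightarrow(iv)\Rightarrow(iii)$ are immediate from the definitions: shape refines both block and word length, and (iii) is the restriction of (iv) to elements of both word and block length equal to two. For $(i)\Leftrightarrow(ii)$ I would expand $(T\otimes\id)(U^{\otimes 2})(e_a\otimes e_b) = (U^{\otimes 2})(T\otimes\id)(e_a\otimes e_b)$ on basis vectors, use $U_{x,z}^* = U_{\bar x,\bar z}$, and read off the coefficient of $e_c\otimes e_{\bar c}$; the case $b=\bar a$ yields the normality identity $U_{x,z}^*U_{x,z}=U_{x,z}U_{x,z}^*$, while the remaining off-diagonal vanishing identities follow from the Cuntz-type relations in $C(H^+(n,0))$ arising from $\alpha(\gamma_\kappa^{\pm 1})$ being unitary.

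For $(iii)\Rightarrow(ii)$ I would expand
\[ \alpha(\gamma_\kappa^{\epsilon_1}\gamma_\lambda^{\epsilon_2}) = \sum_{i\sigma,\,j\tau}\gamma_\sigma^i\gamma_\tau^j \otimes U_{i\sigma,\epsilon_1\kappa}\,U_{j\tau,\epsilon_2\lambda} \]
for $\kappa\neq\lambda$ and observe that the terms with $\sigma=\tau$ and $i=j$ would contribute to the block-length-one monomial $\gamma_\sigma^{2i}$, hence must vanish: $U_{x,z_1}U_{x,z_2}=0$ whenever $z_1,z_2$ have distinct base indices. Combining this with the $H^+(n,0)$ identity $U_{x,z}U_{x,\bar z}=0$ gives $U_{x,z_1}U_{x,z_2}=0$ for every $z_1\neq z_2$. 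Setting $q_{x,z}=U_{x,z}^*U_{x,z}$ and $p_{x,z}=U_{x,z}U_{x,z}^*$, multiplying the previous identity on the left by $U_{x,z_1}^*$ gives $q_{x,z_1}p_{x,z_2}=0$ for $z_1\neq z_2$; since $\sum_z p_{x,z}=1$ (from $UU^*=1$), one deduces $q_{x,z}\leq p_{x,z}$ pointwise. The projections $\{p_{x,z}\}_x$ (for fixed $z$) are pairwise orthogonal by a short $H^+$ computation (using $U_{x_1,z}^*U_{x_2,z}=U_{\bar x_1,\bar z}U_{x_2,z}=0$ for $x_1\neq x_2$, which follows from the $H^+$ relation $U_{y,\bar z}U_{x,z}=0$ for $x\neq\bar y$), so $\sum_x p_{x,z}$ is itself a projection $\leq 1$; combined with $\sum_x q_{x,z}=1$ (from $U^*U=1$) and the pointwise inequality, one gets $\sum_x p_{x,z}=1$ and hence $p_{x,z}=q_{x,z}$, which is (ii).

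The main implication is $(ii)\Rightarrow(v)$. Writing a reduced word $g=\gamma_{\lambda_1}^{\delta_1}\cdots\gamma_{\lambda_L}^{\delta_L}$ and expanding $\alpha(g)$, I would show that the monomial $U_{i_1\sigma_1,\delta_1\lambda_1}\cdots U_{i_L\sigma_L,\delta_L\lambda_L}$ vanishes unless the sequence $(\sigma_j,i_j)$ has block boundaries at exactly the same positions as $(\lambda_j,\delta_j)$. Two pairwise vanishings, both consequences of (ii): within a block ($z_j=z_{j+1}$), the identity $U_{x_j,z}U_{x_{j+1},z}=U_{x_j,z}q_{x_j,z}q_{x_{j+1},z}U_{x_{j+1},z}$ together with orthogonality of the $q$-family forces $x_j=x_{j+1}$; across a block boundary ($\kappa_j\neq\kappa_{j+1}$), the analogous analysis (treating the mixed-sign case via $U_{x_1,z_1}U_{\bar x_1,z_2}=U_{x_1,z_1}U_{x_1,\bar z_2}^*$, which is $0$ by orthogonality of $\{p_{x_1,z}\}_z$ under normality) forces $\sigma(x_j)\neq\sigma(x_{j+1})$. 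Combined with the no-cancellation condition already guaranteed by the $H^+(n,0)$ structure (word-length preservation), the surviving terms correspond exactly to words of the same shape as $g$. The principal obstacle here is the bookkeeping: one must combine the two PVMs $\{p_{x,z}\}_x$ and $\{p_{x,z}\}_z$, the involution $U_{x,z}^*=U_{\bar x,\bar z}$, and the no-cancellation conditions consistently along a word of arbitrary length, ensuring that neither a false block boundary nor an unwanted merger arises.
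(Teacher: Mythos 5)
Your proposal is correct and follows essentially the same route as the paper: the same expansion of the intertwining relation on basis vectors for (i)$\Leftrightarrow$(ii), the same partial-isometry calculus (initial/final projections summing to $1$ together with the row and column orthogonality relations of $H^+(n,0)$) to pass between the two-letter vanishing identities and normality, and the same letter-by-letter factorization of matrix coefficients of reduced words for the word-level implications. The only differences are organizational --- you extract the key vanishing $U_{x,z_1}U_{x,z_2}=0$ from the image of a block-length-two word rather than from the coefficient functional of such a word applied to $\alpha(\gamma_\beta^{2i})$ (a transposed but equivalent relation), and you prove (ii)$\Rightarrow$(v) directly where the paper routes through (iii) --- so the substance is identical.
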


\begin{proof}
 Assume that $U=(U_{x,z})_{x,z\in \Jnd_n}$ is a unitary matrix of partial isometries satisfying the conditions in Proposition 2.3 of \cite{bsk2}.

(i)$\Longleftrightarrow$ (ii) This equivalence was already stated in \cite{bsk2}. The intertwining condition $(T\ot \id)(U^{\ot 2}) = (U^{\ot 2}) (T\ot \id)$ (where $U$ is viewed as a matrix in $B(\bc^{2n} \ot C(H^+(n,0))$) is equivalent to the following equality holding for all $x,y,z,t \in \Jnd_n$
\[ \delta_{y, \bar{z}} U_{x,\bar{y}} U_{t,y} = \delta_{x, \bar{t}} U_{\bar{x},y} U_{\bar{t},z}.\]
Considering the adjoints we see that the above can be rewritten as
\[ \delta_{y,z} U_{x,y}^* U_{t,y} = \delta_{x,t} U_{x,y} U_{t,z}^*, \;\;\;\; x,y,z,t \in \Jnd_n\]
and further simply as (because the entries of $U$ satisfy the condition (2) in Proposition 2.3 of \cite{bsk2})
\[U_{x,y} U_{x,y}^* = U_{x,y}^* U_{x,y}\;\;\;\; x,y\in \Jnd_n.\]

(ii)$\Longleftrightarrow$ (iii)  From the formula for the action (and the fact that we are dealing with  the free product of groups it follows that for $\beta, \sigma, \kappa \in \{1,\ldots,n\}$, $\sigma \neq \kappa$, and $i,j,l \in \{-1,1\}$
\[ f_{\gamma_{\sigma}^j \gamma_{\kappa}^l} (\alpha (\gamma_{\beta}^{2i})) = U_{j\sigma,i\beta} U_{l \kappa, i\beta}.\]
Hence the fact that $\alpha$ preserves the block length of elements of $\freedual$ which have both the block and the word length equal $2$ is equivalent to the condition
\begin{equation}
U_{j\sigma,x} U_{l \kappa, x}=0, \; \; x\in \Jnd_n, j,l \in \{-1,1\}, \sigma, \kappa \in \{1,\ldots,n\}, \sigma \neq \kappa. \label{block2}
\end{equation}
If (ii) holds then for each $x,y,z \in \Jnd_n$, $y\neq z$ we have
\[ U_{y,x} U_{z,x} = U_{y,x} U_{z,x} U_{z,x}^* U_{z,x} = U_{y,x} U_{z,x}^* U_{z,x} U_{z,x} =0,\]
so that  \eqref{block2} holds.
Assume then that \eqref{block2} holds and for each $x,y \in \Jnd_n$ consider the projections $P_{y,x}= U_{y,x} U_{y,x}^*$ and $Q_{y,x} = U_{y,x}^* U_{y,x}$. Recall that
\[Q_{y,x} P_{\bar{y}, x} = Q_{y,x} P_{y, \bar{x}}=0,\]
so that in fact \eqref{block2} implies that
\[ Q_{y,x} P_{z, x} =0, \;\;\;\; x,y,z \in \Jnd_n, y \neq z.\]
Hence
\[ P_{x,y} Q_{x,y} = P_{x,y} (1-\sum_{z\in \Jnd_n} Q_{x,z})= P_{x,y}\]
and similarly \[P_{x,y} Q_{x,y} = (1-\sum_{z\in \Jnd_n} P_{x,z}) Q_{x,y} = Q_{x,y},\]
which proves (ii).

(iii)$\Longrightarrow$(v) Suppose that (ii) holds. Let $v,w\in \bF_n$ be two words which have different shapes. We want to show that then  we must have $f_w (\alpha(v)) =0$. We may assume that $l(w)=l(v)=k$ and that both words $v,w$ are in the reduced form. As the action $\alpha$ is assumed to preserve the usual length,  we actually have
\[f_w (\alpha(v))= f_{w} (\alpha(v_1) \cdots\alpha(v_n))  = f_{w_1} (\alpha(v_1)) \cdots f_{w_n} (\alpha(v_n)).\]
But as $v$ and $w$ have different shapes, there must be some $j\in \{1,\cdots,n-1\}$ such that $b(v_j v_{j+1})=2\neq b(w_j w_{j+1})=1$.
As the words  $v'=v_j v_{j+1}$ and $w'=w_j w_{j+1}$ are again reduced, we must have
\[f_{w'}(\alpha (v')) = f_{w_j} (\alpha(v_j)) f_{w_{j+1}} (\alpha(v_{j+1})).\]
But from (iii) it follows that
\[ f_{w'} (\alpha (v')) =0\]
and the last three displayed formulas end the proof.

(v)$\Longrightarrow$(iv)$\Longrightarrow$(iii) This is obvious.
\end{proof}


\begin{deft} \label{defKn}
The algebra $A_k(n)$ is the universal $C^*$-algebra generated by elements $\{U_{x,y}, x,y \in \Jnd_{n}\}$ which satisfy the following conditions:
\begin{rlist}
\item $U_{x,y}^*=U_{\bar{x}, \bar{y}}$, $x,y \in \Jnd_n$;
\item each $U_{x,y}$ is a normal partial isometry;
\item the matrix $(U_{x,y} U_{x,y}^*)_{x,y\in \Jnd_n}$ is a magic unitary.
\end{rlist}
\end{deft}

\begin{tw} \label{DefKN}
The algebra $A_k(n)$ is  the algebra of continuous functions on a compact quantum group, denoted further $K^+_n$. The unitary $(U_{x,y})_{x,y \in \Jnd_{n}} \in M_{2n}(C(K_n^+))$ is a fundamental unitary representation of $K_n^+$. The quantum group $K^+_n$ is the universal quantum group acting on $\freedual$ and preserving both the word-length and the block-length; in other words $K^+_n=\QISOb(\freedual)$.
\end{tw}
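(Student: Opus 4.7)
The plan is to prove the theorem in three stages: realize $A_k(n)$ as the algebra of continuous functions on a compact matrix quantum group with fundamental unitary $U = (U_{xy})_{x,y \in \Jnd_n}$; exhibit the natural action of $K_n^+$ on $C^*(\bF_n)$ via \eqref{Fnaction} and verify it preserves both the word length and the block length; and finally establish universality among such actions.

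For the first stage, the key step is to show that $U$ is unitary in $M_{2n}(A_k(n))$. Setting $P_{xy} := U_{xy}U_{xy}^* = U_{xy}^*U_{xy}$ (equality by normality), the magic unitary hypothesis yields row and column sums $\sum_y P_{xy} = \sum_x P_{xy} = 1$, while the row orthogonality relations $P_{xy}P_{xy'} = 0$ for $y \neq y'$ combined with normality force $U_{xy}U_{xy'}^* = U_{xy} P_{xy} P_{xy'} U_{xy'}^* = 0$, and analogously $U_{xy}^* U_{x'y} = 0$ for $x \neq x'$; hence $UU^* = U^*U = I$. By condition (i) of Definition \ref{defKn}, $\bar U$ is a permutation of the entries of $U$ and is therefore also unitary. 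By Woronowicz's theorem for compact matrix quantum groups, it remains to construct a coproduct satisfying $\Delta(U_{xy}) = \sum_z U_{xz} \otimes U_{zy}$. Via the universal property of $A_k(n)$, this reduces to checking that $\tilde U_{xy} := \sum_z U_{xz} \otimes U_{zy}$ satisfies the defining relations of Definition \ref{defKn}. Condition (i) is routine; the main computation for (ii) and (iii) is that the cross-terms vanish in $\tilde U_{xy} \tilde U_{xy}^*$ by the same orthogonality relations, yielding $\tilde U_{xy} \tilde U_{xy}^* = \sum_z P_{xz} \otimes P_{zy}$, which is manifestly a projection of magic-unitary type and coincides with $\tilde U_{xy}^* \tilde U_{xy}$.

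For the second stage, the defining relations of $A_k(n)$ strictly refine those of Definition \ref{definitHn}, so $K_n^+$ is a quantum subgroup of $H^+(n,0) = \QISO(\freedual)$. The restricted canonical action has the form \eqref{Fnaction} and preserves the word length; by the normality of the $U_{xy}$ and Lemma \ref{mainlemma} (equivalence (ii) $\Leftrightarrow$ (iv)), it also preserves the block length, so $(\alpha, K_n^+)$ is an object of the category defining $\QISOb(\freedual)$.

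For the third stage, let $(\alpha', \QG)$ be any object of that category. Word-length preservation together with universality of $H^+(n,0)$ produces a morphism $C(H^+(n,0)) \to C(\QG)$ whose images $V_{xy}$ of the fundamental unitary of $H^+(n,0)$ satisfy conditions (i)--(ii) of Definition \ref{definitHn} and assemble into a unitary matrix $V$. Block-length preservation together with Lemma \ref{mainlemma} yields normality of each $V_{xy}$. For condition (iii) of Definition \ref{defKn}, unitarity of $V$ gives $\sum_y V_{xy} V_{xy}^* = 1$ and $\sum_x V_{xy} V_{xy}^* = 1$, and the standard $C^*$-algebraic fact that a family of projections summing to the identity is automatically pairwise orthogonal then ensures $(V_{xy} V_{xy}^*)$ is a magic unitary. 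Hence $V$ satisfies all the defining relations of $A_k(n)$, and universality delivers the required unique unital $^*$-homomorphism $C(K_n^+) \to C(\QG)$ sending $U_{xy} \mapsto V_{xy}$, which intertwines both coproducts and actions by construction. The main obstacle I expect is the first stage: the interplay between normality and the magic-unitary orthogonalities has to be exploited carefully to make the candidate coproduct well-defined, but once this is achieved the universality argument is a direct application of Lemma \ref{mainlemma} together with universality of $H^+(n,0)$.
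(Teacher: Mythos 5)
Your proposal is correct and takes essentially the same route as the paper: the compact (matrix) quantum group structure on $A_k(n)$ is obtained by checking that the defining relations are preserved under the coproduct substitution $U_{x,y}\mapsto\sum_z U_{x,z}\ot U_{z,y}$ (together with unitarity of $U$ and $\bar U$), and the identification $K_n^+=\QISOb(\freedual)$ rests on the equivalence (ii)$\Leftrightarrow$(iv) of Lemma \ref{mainlemma} combined with the universality of $H^+(n,0)=\QISO(\freedual)$. You merely spell out details the paper leaves implicit (unitarity of $U$ from normality plus the magic-unitary condition, and conversely that the magic-unitary relation is automatic for the fundamental representation of any block-length preserving action), which is consistent with the paper's argument.
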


\begin{proof}
The fact that the algebra $A_k(n)$ is the algebra of continuous functions on a compact matrix quantum group, with $(U_{x,y})_{x,y \in \Jnd_{n}}$ as a fundamental unitary representation follows from the fact that the conditions in Definition \ref{defKn} are stable under replacing $U_{x,y}$ by $\sum_{z\in \Jnd_n} U_{x,z} \ot U_{z,y}$ (corresponding to the coproduct), $U_{x,y}$ by $U^*_{y,x}$ (corresponding to the antipode) and $U_{x,y}$ by $\delta_{x,y} 1$ (corresponding to the counit). The fact that the resulting compact quantum group is  indeed equal to $\QISOb(\freedual)$ follows from the equivalence of conditions (ii) and (iv) in Lemma \ref{mainlemma}.
\end{proof}

Note that the implication (iv)$\Longrightarrow$(v) in Lemma  \ref{mainlemma} shows that the canonical action of $K^+_n$ on $C^*(\bF_n)$ preserves also the shape.

As mentioned in the beginning of this section, the quantum group $K_n^+$ was in fact discovered in \cite{bsk2} via its category of representations, described in terms of non-crossing partitions. The following result is the first half of Theorem 6.5 of \cite{bsk2}; $D_{\infty}$ denotes a certain category of non-crossing partitions with legs coloured black or white, with the so-called signed numbers of white and black legs equal. To each $\pi \in D_{\infty}(k,l)$ one can associate a unique operator $T_{\pi}\in B((\bc^{2n})^{\ot k}; (\bc^{2n})^{\ot l})$ (the details can be found in Section 3 of \cite{bsk2}).

\begin{tw}
Let $K^+_n$ be the quantum group introduced in Theorem \ref{DefKN}. Then for any $k,l\in\mathbb N_0$ we have
$\textup{Hom}(U^{\otimes k},U^{\otimes l})=\textup{span}(T_\pi|\pi\in D_\infty(k,l))$.
\end{tw}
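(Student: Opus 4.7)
The plan is to apply Woronowicz's Tannaka--Krein duality, exhibiting the defining relations of $K_n^+$ (Definition \ref{defKn}) as equivalent to a short list of generating partitions in $D_\infty$ producing intertwiners of $U$. The inclusion $\textup{span}(T_\pi\mid \pi\in D_\infty(k,l))\subset\textup{Hom}(U^{\ot k},U^{\ot l})$ is the easy direction: I would identify a generating family of $D_\infty$ (coming from the description of $D_\infty$ as the category generated by certain elementary non-crossing partitions, together with the colouring rule on legs) and verify case by case that the associated operators $T_\pi$ intertwine the appropriate tensor powers of $U$. Concretely, the basic operators to check are (a) the identity partition (trivial); (b) the ``pair'' partition of two white/black legs, which encodes the relation $U_{\bar x,\bar y}=U_{x,y}^*$; (c) the singleton/through-block partition, encoding the fact that each $U_{xy}$ is a normal partial isometry; and (d) the four-leg partition corresponding to the operator $T$ of Lemma \ref{mainlemma}, whose intertwining property is precisely the normality condition proved there. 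By functoriality of $\pi\mapsto T_\pi$ under horizontal and vertical concatenation of partitions, every $T_\pi$ with $\pi\in D_\infty$ is an intertwiner.

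For the reverse inclusion I would invoke Woronowicz's Tannaka--Krein theorem. The collection $\cat:=(\textup{span}(T_\pi\mid\pi\in D_\infty(k,l)))_{k,l}$ is a concrete tensor $C^*$-subcategory of Hilbert spaces containing the identity and stable under tensor product, composition, and involution, because $D_\infty$ is a category of partitions closed under these operations (this is established in \cite{bsk2}). Hence $\cat$ is the category of intertwiners of a uniquely determined compact matrix quantum group $\QG_\cat$, whose fundamental representation $V$ is of dimension $2n$. One then reads off the defining relations of $\QG_\cat$ from the generating partitions enumerated above: each elementary $T_\pi\in\cat$ translates into a polynomial relation on the entries of $V$, and the relations obtained are exactly those of Definition \ref{defKn}. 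By universality this yields a surjective Hopf $^*$-algebra morphism $R(\QG_\cat)\to R(K_n^+)$ sending $V_{xy}$ to $U_{xy}$.

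To conclude, I would argue that the morphism is in fact an isomorphism, which by Woronowicz's theorem is equivalent to the equality $\textup{Hom}_{K_n^+}(U^{\ot k},U^{\ot l})=\textup{Hom}_{\QG_\cat}(V^{\ot k},V^{\ot l})$ for all $k,l$. The nontrivial inclusion is $\subset$: one must rule out any additional intertwiners for $U$ beyond those coming from $D_\infty$. The standard way to handle this is to observe that every intertwiner of $U$ satisfies, by the first inclusion already established and the defining relations of $K_n^+$, exactly the linear constraints that cut out $\textup{span}(T_\pi)$ inside $B((\bc^{2n})^{\ot k},(\bc^{2n})^{\ot l})$. Equivalently, one can check that the Tannaka--Krein reconstruction applied to the defining relations of $K_n^+$ produces precisely the partition category $D_\infty$; this is carried out in spirit in \cite{bsk2} and relies on the combinatorial description of $D_\infty$ as the smallest category of partitions containing the four elementary partitions above and closed under the category operations.

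The main obstacle I foresee is the last step: showing that no ``exotic'' intertwiners appear. This requires an explicit combinatorial argument identifying the linear span of $\{T_\pi\mid \pi\in D_\infty(k,l)\}$ with the joint kernel of the linear constraints imposed on intertwiners by the relations in Definition \ref{defKn}. Equivalently, one must prove a linear independence / spanning statement for the operators $T_\pi$, indexed by the (infinite) category $D_\infty$, inside each finite-dimensional space of intertwiners. This is the technical heart of the Tannaka--Krein identification, and for $K_n^+$ it should follow the general strategy used for the other ``super-easy'' quantum groups in \cite{bsk1,bsk2}, where the non-crossing nature of $D_\infty$ makes the combinatorics tractable.
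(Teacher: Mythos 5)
Your outline follows the standard Tannaka--Krein strategy, and it is worth noting that the paper itself offers no proof at all: the statement is imported verbatim as the first half of Theorem 6.5 of \cite{bsk2}, so the only ``paper proof'' is that citation, and your sketch is in the spirit of what is done there. As a proof, however, your proposal has a genuine gap exactly where you say you foresee an obstacle. The argument you give for the hard inclusion in the third paragraph --- ``every intertwiner of $U$ satisfies, by the first inclusion already established and the defining relations of $K_n^+$, exactly the linear constraints that cut out $\textup{span}(T_\pi)$'' --- is circular: it restates the conclusion rather than deriving it. The correct route, which you gesture at but do not carry out, is (a) to verify that the relations of Definition \ref{defKn} are \emph{precisely} the requirement that a specific finite family of coloured partitions act as intertwiners of $U$, (b) to check that $\pi\mapsto T_\pi$ is compatible with composition, tensor product and involution of partitions, and then (c) to prove the purely combinatorial statement that the category of coloured partitions generated by that finite family is exactly $D_\infty$. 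Woronowicz's theorem then identifies $\textup{Hom}(U^{\ot k},U^{\ot l})$ with the span of the $T_\pi$ over the generated category, hence over $D_\infty(k,l)$. Step (c) is the technical heart, it is exactly what you defer to ``the general strategy used in \cite{bsk1,bsk2}'', and without it nothing rules out exotic intertwiners; so the theorem is not proved by your text.

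Two further points. First, your matching of generators to relations in step (a) is asserted rather than checked: your list (a)--(d) does not visibly account for condition (iii) of Definition \ref{defKn} (the requirement that $(U_{x,y}U_{x,y}^*)_{x,y}$ be a magic unitary), and items (c) and (d) both claim to encode normality, so the dictionary between elementary partitions and the three defining relations needs to be written out explicitly. Second, linear independence of the operators $T_\pi$ is not needed for this statement (only the spanning/generation claim and the functoriality of $\pi\mapsto T_\pi$); conflating the two makes the final paragraph suggest more work than is required, while leaving undone the work that actually is.
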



\begin{rem}
The above theorem, together with the results on representation theory of the two-parameter families of compact quantum groups obtained in \cite{bsk1} and the considerations in \cite{bcz} suggests that there exists a natural generalization of the class of \emph{easy quantum groups} introduced in \cite{bsp}. Recall that informally speaking easy quantum groups are those whose spaces of intertwiners are spanned by certain partitions; the new, broader class, should also admit representation theories described by coloured partitions, as it is for example for $K^+_n$, and connect the theory developed in \cite{bsk1} to another possible (un)twisting related to the Drinfeld-Jimbo parameter appearing in the implementation $\pi \to T_{\pi}$.
\end{rem}

\vspace*{0.5cm}

\noindent \emph{Acknowledgments.}
The work of T.B.\  was supported by the ANR grants Galoisint and Granma. A.S.\ was partially supported by National Science Center (NCN) grant \\no.~2011/01/B/ST1/05011.


\begin{thebibliography}{99}

\bibitem[Ban]{banmet} T. Banica, Quantum automorphism groups of small metric spaces, {\em Pacific J. Math.} {\bf 219} (2005), 27-51.


\bibitem[BB]{BanBic} T. Banica and J. Bichon, Quantum automorphism groups of vertex-transitive graphs of order $\leq 11$, \emph{J. Algebraic Combin.} \textbf{26} (2007), 83--105.


\bibitem[BBC]{bbc}T. Banica, J. Bichon and B. Collins, The hyperoctahedral quantum group, {\em J. Ramanujan Math. Soc.} {\bf 22} (2007), 345--384.

\bibitem[BCZ]{bcz}T. Banica, B. Collins and P. Zinn-Justin, Spectral analysis of the free orthogonal matrix, {\em Int. Math. Res. Not.} {\bf 17} (2009), 3286--3309.

\bibitem[BCS]{bcs}T. Banica, S. Curran and R. Speicher, Classification results for easy quantum groups, {\em Pacific J. Math.} {\bf 247} (2010), 1--26.

\bibitem[BS$_1$]{bsk1} T. Banica and A. Skalski, Two-parameter families of quantum symmetry groups,  {\em J. Funct. Anal.} {\bf 260} (2011), no.\,11, 3252--3282.


\bibitem[BS$_2$]{bsk2} T. Banica and A. Skalski, Quantum isometry groups of duals of free powers of cyclic groups,  {\em Int.\,Mat.\,Res.\,Not.},  9 (2012), no.\,6,  2094--2122.


\bibitem[BSp]{bsp} T. Banica and R. Speicher, Liberation of orthogonal Lie groups, {\em Adv. Math.} {\bf 222} (2009), 1461--1501.

\bibitem[BMT]{coamen}
E.\,B{\'e}dos, G.J.\,Murphy, and L.\,Tuset, Co-amenability of compact quantum groups, {\em J. Geom. Phys.} \textbf{40} (2001), no.\,2, 130--153.


\bibitem[BhG$_1$]{bg1}J. Bhowmick and D. Goswami, Quantum isometry groups: examples and computations, {\em Comm. Math. Phys.} {\bf 285} (2009), 421--444.


\bibitem[BhG$_2$]{JyotDeb2} J.\,Bhowmick and D.\,Goswami, Quantum group of orientation preserving Riemannian isometries, \emph{J.\,Funct.\,Anal.}
 \textbf{257}  (2009),  no.\,8, 2530--2572


\bibitem[BGS]{bgs} J.\,Bhowmick, D. Goswami and A. Skalski, Quantum isometry groups of 0-dimensional manifolds, {\em Trans. Amer. Math. Soc.} {\bf 363} (2011), 901--921.


\bibitem[BhS]{bhs} J.\,Bhowmick and A.\,Skalski, Quantum isometry groups of noncommutative manifolds associated to group ${\rm C}^*$-algebras, {\em J.\,Geom.\,Phys.} {\bf 60} (2010), 1474--1489.

\bibitem[BDV]{bdv} J.\,Bichon, A.\,De Rijdt and S.\,Vaes, Ergodic coactions with large multiplicity and monoidal equivalence of quantum groups, {\em Comm. Math. Phys.} {\bf 262} (2006), 703--728.


\bibitem[Boc]{Boca}  F.\,Boca, Ergodic actions of compact matrix pseudogroups on $C^*$-algebras,
\emph{Recent advances in operator algebras (Orl\'eans, 1992),
Ast\'erisque} No. 232 (1995), 93–-109.

\bibitem[CI]{Chrivan} E.\,Christensen and C.\,Ivan, Spectral triples for AF ${\rm C}^*$-algebras and metrics on the Cantor set, {\em J. Operator Theory} {\bf 56} (2006), 17--46.

\bibitem[CK]{CK} J.\,Cuntz and W.\,Krieger, A class of $C^*$-algebras and topological Markov chains, \emph{Invent. Math.} \textbf{56} (1980), 251--268.

\bibitem[Cur]{Curran} S.\,Curran, Quantum exchangeable sequences of algebras, \emph{Indiana Univ.\,Math.\,J.} \textbf{58} (2009), 1097--1126.

\bibitem[DK] {DiK}
M.\,Dijkhuizen and T.\,Koornwinder,
CQG algebras -- a direct algebraic approach to compact quantum groups,
\emph{Lett.\,Math.\,Phys.}
\textbf{32} (1994) no.\,4, 315--330.

\bibitem[Go$_1$]{gos} D.\,Goswami, Quantum group of isometries in classical and noncommutative geometry, {\em Comm. Math. Phys.} {\bf 285} (2009), 141--160.

\bibitem[Go$_2$]{gos2} D.\,Goswami, Quantum isometry group for spectral triples with real structure, \emph{SIGMA Symmetry Integrability Geom.\,Methods Appl.} \textbf{6} (2010), Paper 007, 7 pp..

\bibitem[Lan]{Lance}
E.C.\,Lance,
``Hilbert $C^*$-modules, a toolkit for operator algebraists,''
LMS Lecture Note Series \textbf{210}, Cambridge University Press, 1995.


\bibitem[L-DS]{LDS} J.\,Liszka-Dalecki, P.M.\,So\l tan, Quantum isometry groups of symmetric groups, {\em Internat.\,J.\,Math.},   {\bf 23} (2012), 1250074.


\bibitem [OR]{OR}  N.\,Ozawa and M.A.\,Rieffel, Hyperbolic group {$C^*$}-algebras and free-product {$C^*$}-algebras as compact quantum metric spaces,
\emph{Canad. J. Math.} \textbf{57}  (2005),  no. 5, 1056--1079.

\bibitem[Pod]{Podles}
P.~Podle{\'s}, Symmetries of quantum spaces. Subgroups and quotient spaces of
  quantum {${\rm SU}(2)$} and {${\rm SO}(3)$} groups, \emph{Comm.\,Math.\,Phys.} \textbf{170} (1995) no.\,1, 1--20.


\bibitem[QS]{Sabbe} J.\,Quaegebeur and M.\,Sabbe, Isometric coactions of compact quantum groups on compact quantum metric spaces, \emph{Proc.\,Indian
Acad.\,Sci.\,(Math.\,Sci.)},  to appear, available at arXiv:1007.0363.

\bibitem [Rae] {Rae} I.\,Raeburn, ``Graph $C^*$-algebras", CBMS Regional Conference Series in Mathematics, 103, Providence, RI, 2005.

\bibitem[So\l]{Soltan} P.\,So\l tan, On actions of compact quantum groups,  {\em Illinois J.\,Math.}, to appear, available at arXiv:1003.5526.

\bibitem[Tom]{Reiji} R.~Tomatsu, Compact quantum ergodic systems, {\em J. Funct. Anal.}, {\bf 254} (2008), no.\,1,  1--83.


\bibitem[VDW]{univ} A.\,Van Daele and S.\,Wang, Universal quantum groups, {\em Internat. J. Math.} {\bf 7} (1996), 255--263.

\bibitem[Ver]{Vergnioux} R.\,Vergnioux, The property of rapid decay for discrete quantum groups, \emph{J.\,of Operator Theory}, \textbf{57} (2007) 303--324.


\bibitem[Wa$_1$]{wa2} S.\,Wang, Quantum symmetry groups of finite spaces, {\em Comm. Math. Phys.} {\bf 195} (1998), 195--211.

\bibitem[Wa$_2$]{Wang3} S.\,Wang, Ergodic actions of universal quantum groups on operator algebras, {\em Comm. Math. Phys.} {\bf 203} (1999), no. 2, 481–-498.

\bibitem[Wo$_1$]{wo1}S.L. Woronowicz, Compact matrix pseudogroups, {\em Comm. Math. Phys.} {\bf 111} (1987), 613--665.

\bibitem[Wo$_2$]{wo2}
S.L.\,Woronowicz,
Compact quantum groups,
\emph{in} ``Sym\'etries Quantiques,'' Proceedings, Les Houches 1995,
\emph{eds.\ A. \!Connes, K. \!Gawedzki \& J. \!Zinn-Justin},
North-Holland, Amsterdam 1998, pp.\,845--884.


\end{thebibliography}
\end{document}